\newtheorem{theorem}{Theorem}[section]
\newtheorem{corollary}[theorem]{Corollary}
\newtheorem{definition}[theorem]{Definition}
\newtheorem{lemma}[theorem]{Lemma}
\newtheorem{problem}[theorem]{Problem}
\newtheorem{proposition}[theorem]{Proposition}
\def\J#1#2#3{ \left\{ #1,#2,#3 \right\} }
\def\RR{{\mathbb{R}}}
\def\11{\textbf{$1$}}
\title[Derivable linear maps on C$^*$-algebras]{Linear maps on C$^*$-algebras which are derivations or triple derivations at a point}
\author[Ben Ali Essaleh]{Ahlem Ben Ali Essaleh}
\email{ahlem.benalisaleh@gmail.com}
\address{Faculte des Sciences de Monastir, Département de Mathématiques, Avenue de L'environnement, 5019 Monastir, Tunisia}
\author[Peralta]{Antonio M. Peralta}
\email{aperalta@ugr.es}
\address{Departamento de An{\'a}lisis Matem{\'a}tico, Facultad de
Ciencias, Universidad de Granada, 18071 Granada, Spain.}
\keywords{derivations; triple derivations; derivable mapping at a point; triple derivation at a point; generalized derivation; triple homomorphism at a point; orthogonality preserver}
\subjclass[2010]{47B49, 46L05, 46L40, 46T20, 47L10, 47L35, 47L99, 47B47, 47B48}
\begin{document}

\maketitle

\begin{abstract} We prove new results on generalized derivations on C$^*$-algebras. By considering  the triple product  $\{a,b,c\} =2^{-1} (a b^* c + c b^* a)$, we introduce the study of linear maps which are triple derivations or triple homomorphisms at a point. We prove that a continuous linear $T$ map on a unital C$^*$-algebra is a generalized derivation whenever it is a triple derivation at the unit element. If we additionally assume $T(1)=0,$ then $T$ is a $^*$-derivation and a triple derivation. Furthermore, a continuous linear map on a unital C$^*$-algebra which is a triple derivation at the unit element is a triple derivation. Similar conclusions are obtained for continuous linear maps which are derivations or triple derivations at zero.

We also give an automatic continuity result, that is, we show that generalized derivations on a von Neumann algebra and linear maps on a von Neumann algebra which are derivations or triple derivations at zero are all continuous even if not assumed a priori to be so.
\end{abstract}

\section{Introduction}

Automorphisms and derivations on Banach algebras are among the most intensively studied classes of operators. Recent studies are concerned with the question of finding weaker conditions to characterize these maps. One of the most fruitful lines studies maps which are derivations or automorphisms at a certain point. More concretely, a linear map $S$ from a Banach algebra $A$ to a Banach $A$-bimodule $X$ is said to be a \emph{derivation} at a point $z\in A$ if the identity \begin{equation}\label{eq derivation} S(ab ) = S(a) b + a S(b),
\end{equation} holds for every $a,b\in A$ with $a b =z$. In the literature a linear map which is a derivation at a point $z$ is also called \emph{derivable} at $z$. Clearly, a linear map $D$ from $A$ into $X$ is a derivation if and only if it is a derivation at every point of $A$. We can similarly define linear maps which are Jordan derivations or generalized derivations at a point (see subsection \ref{subsect1} for detailed definitions).\smallskip

Following the terminology set by J. Alaminos, M. Bresar, J. Extremera, and A. Villena in \cite[\S 4]{AlBreExVill09} and J. Li and Z. Pan in \cite{LiPan}, we shall say that a linear operator $G$ from a Banach algebra $A$ into a Banach $A$-bimodule $X$ is a \emph{generalized derivation} if there exists $\xi\in  X^{**}$ satisfying $$G(ab) = G(a)  b + a  G(b) - a \xi b \hbox{ ($a, b \in A$).}$$ Every derivation is a generalized derivation, however there exist generalized derivations which are not derivation. This notion is very useful when characterizing (generalized) derivations in terms of annihilation of certain products of orthogonal elements (see, for example, Theorem 2.11 in \cite[\S 2]{AyuKudPe2014}).\smallskip

The first results on linear maps that are derivable at zero appear in \cite[Subsection 4.2]{Bre07} and \cite[Theorem 2]{ChebKeLee}, where they were related to generalized derivations. In \cite[Theorem 4]{JingLuLi2002} W. Jing, S.J. Lu, and P.T. Li prove that the implication $$\delta \hbox{ is a derivation at zero } \Rightarrow \delta \hbox{ is a generalized derivation},$$ holds for every continuous linear map $\delta$ on a von Neumann algebra. If, under the above hypothesis $\delta (1)=0$, then $\delta$ is a derivation. We shall prove in Corollary \ref{c JingLuLi without continuity} that the hypothesis concerning continuity can be relaxed.\smallskip

W. Jing proves in \cite[Theorems 2.2 and 2.6]{Jing} the following result: for an infinite dimensional Hilbert space $H$, a linear map $\delta: B(H) \to B(H)$ which is a generalized Jordan derivation at zero, or at 1, is a generalized derivation. We observe that, in the latter result, $\delta$ is not assumed to be continuous.\smallskip

More related results read as follow. Let $X$ be a Banach $A$-bimodule over a Banach algebra $A$. In 2009, F. Lu establishes that a linear map $\delta : A\to X$ is a derivation whenever it is continuous and a derivation at an element which is left (or right) invertible (see \cite{Lu2009}). Is is further shown that $\delta$ is a derivation if it continuous and a derivation at an idempotent $e$ in $A$ such that for $x\in X$ the condition $e A (1-e) X =\{0\}$ implies $(1-e) X=\{0\}$ and the condition $XeA(1-e)=\{0\}$ gives $X e =0$. Here the linear map is assumed to be continuous.\smallskip

Concerning our goals, J. Zhu, Ch. Xiong, and P. Li prove in \cite{ZhuXiLi} a significant result showing that, for a Hilbert space $H$, a linear map $\delta :B(H)\to B(H)$ is a derivation if and only if it is a derivation at a non-zero point in $B(H)$. It is further shown that a linear map which is a derivation at zero need not be a derivation (for example, the identity mapping on $B(H)$ is a derivation at zero but it is not a derivation).\smallskip

We refer to \cite{Houqi, JingLuLi2002, ZhangHouQi2014, ZhangHouQi2014b, Zhu2007, ZhuXio2002, ZhuXio2005, ZhuXio2007, ZhuXion2008} and \cite{ZhuZhao2013} for additional results on linear or additive maps on JSL algebras, finite CSL algebras, nest algebras or standard operator algebras.\smallskip

In the present note we continue with the study of those linear maps which are derivable at zero. We shall introduce a new point of view by exploiting those properties of a C$^*$-algebra $A$ which are related to the ternary product defined by \begin{equation}\label{eq C*-triple product}\{a,b,c\}=\frac12 (a b^* c + cb^* a) \ \ \ (a,b,c\in A).
\end{equation}

Every C$^*$-algebra $A$ is a JB$^*$-triple (in the sense of \cite{Kaup83}) with respect to the triple product defined in \eqref{eq C*-triple product}. This is the natural triple product appearing in the study of J$^*$-algebras by L.A. Harris \cite{Harr74,Harris81} and the \emph{ternary rings of operators} (TRO's) in the sense of D.P. Blecher and M. Neal in \cite{BleNe} and M. Neal and B. Russo in \cite{NealRusso}.\smallskip

A linear map $T$ between C$^*$-algebras preserving the previous triple product is called a \emph{triple homomorphism}. A triple derivation on a C$^*$-algebra $A$ is a linear map $\delta : A\to A$ satisfying the generalized Leibnitz's rule $$\delta \{a,b,c\} = \{\delta(a),b,c\}+ \{a,\delta(b),c\}+\{a,b,\delta(c)\},$$ for all $a,b,c\in A$.\smallskip

We recall that a $^*$-derivation on a C$^*$-algebra $A$ is a derivation $D:A\to A$ satisfying $D(a)^*= D(a^*)$ for all $a\in A$.  Examples of derivations on $A$ be given by fixing $z\in A$ and defining $D_z : A\to A$ as the linear map defined by $D_z (a) = [z,a] = z a - az$. It is known that every $^*$-derivation on a C$^*$-algebra is a triple derivation in the above sense. It is further known the existence of derivations on $A$ which are not triple derivations (compare \cite[Comments after Lemma 3]{BurFerGarPe2014}). \smallskip

On the other hand, for each $a$ in a C$^*$-algebra $A$, the mapping $\delta_a (x) := i  \{a,a,x\}$ is a triple derivation on $A$, however, $ i (a^* a + a a^*)= 2\delta_a (1) =\delta_a (1)=  \frac{i}{2} (a^* a + a a^*)$ if and only if $a=0$, and thus $\delta_a$ is not an associative derivation on $A$ for every $a\neq 0$.\smallskip

In a recent paper M.J. Burgos, J.Cabello-S{\'a}nchez and the second author of this note explore those linear maps between C$^*$-algebras which are $^*$-homomorphisms at certain points of the domain, for example, at the unit element and at zero (see the introduction of section \ref{sec: triple hom at a point} for more details).\smallskip

In this paper we widen the scope by introducing linear maps which are triple derivations or triple homomorphism at a certain point. Our study will be conducted around the next two notions.

\begin{definition}\label{def triple derivation at a point}
Let $T:A\to A$ be a linear map on a C$^*$-algebra, and let $z$ be an element in $A.$  We shall say that $T$ is a triple derivation at $z$ if $z=\{a,b,c\}$ in $A$ implies that $T(z)=\{T(a),b,c\}+\{a,T(b),c\}+\{a,b,T(c)\}.$
\end{definition}

The set of all linear maps on $A$ which are triple derivable at an element $z\in A$ is a subspace of the space $L(A)$ of all linear operators on $A$.

\begin{definition}\label{def triple homomorphism at a point}
Let $T:A\to B$ be a linear map between C$^*$-algebras, and let $z$ be an element in $A.$  We shall say that $T$ is a triple homomorphism at $z$ if $z=\{a,b,c\}$ in $A$ implies that $\{T(a),T(b),T(c)\}=T(z).$
\end{definition}

Let $T$ be a continuous linear map on a unital C$^*$-algebra. In Theorem \ref{t triple derivation at 1 are generalized derivations} we prove that $T$ being a triple derivation at the unit implies that $T$ is a generalized derivation. If we also assume that $T(1)=0,$
then $T$ is a $^*$-derivation and a triple derivation (see Proposition \ref{symmetric}). Among the consequences, we establish that a continuous linear map on a unital C$^*$-algebra which is a triple derivation at the unit element is a triple derivation (see Corollary \ref{c derivable at 1 are triple derivations}).\smallskip

When we study linear maps which are triple derivation at zero, our conclusions are stronger. We begin with an extension of \cite[Theorem 4]{JingLuLi2002} to the setting of unital C$^*$-algebras. We show that a continuous linear map $T$ on a C$^*$-algebra is a generalized derivation whenever it is a derivation or a triple derivation at zero (see Theorem \ref{t continuous triple der at zero are g der}). Moreover, a bounded linear map $T$ on a C$^*$-algebra $A$ which is a triple derivation at zero with $T(1)=0$ is a $^*$-derivation, and hence a triple derivation (compare Corollary \ref{c bl triple derivable at zero with T1=0}). We further show that a bounded linear map on a unital C$^*$-algebra $A$ which is a triple derivation at zero and satisfies $T(1)^*=-T(1)$ is a triple derivation (see Corollary \ref{c T1 skew}).\smallskip

For linear maps whose domain is a von Neumann algebra the continuity assumptions can be dropped for certain maps. More concretely, generalized derivations on a von Neumann algebra, linear maps on a von Neumann algebra which are derivations (respectively, triple derivations) at zero are all continuous (see Corollary \ref{c automatic cont gen der and triple der at zero on von Neumann}). Several characterizations of generalized derivations on von Neumann algebras are established in Corollary \ref{c non continuous generalized derivation} without assuming continuity. In this particular setting, some hypothesis in \cite[Theorem 4]{JingLuLi2002} and \cite{Lu2009} can be relaxed.\smallskip

In section \ref{sec: triple hom at a point} we study continuous linear maps on C$^*$-algebras which are triple homomorphisms at zero or at the unit element. Let $T:A\to B$ be a continuous linear map between C$^*$-algebras, where $A$ is unital. We prove in Theorem \ref{theorem3131} that if $T$ is a triple homomorphism at the unit of $A,$ then $T$ is a triple homomorphism. Furthermore, $T(1)$ is a partial isometry and $T: A \to B_2 (T(1))$ is a Jordan $^*$-homomorphism.\smallskip

For triple homomorphisms at zero, we rediscover the orthogonality preserving operators. More concretely, let $T :A \to B$ be a bounded linear map between two C$^*$-algebras. We shall revisit the main results in \cite{BurFerGarMarPe08} to show that $T$ is orthogonality preserving if, and only if, $T$ preserves zero-triple-products (i.e. $\{a,b,c\}=0$ in $A$ implies $\{T(a),T(b),T(c)\}=0$ in $B$) if, and only if, $T$ a triple homomorphism at zero.

\subsection{Basic background and definitions}\label{subsect1} \ \ \vspace*{1mm}

The class of C$^*$-algebras admits a Jordan analogous in the wider category of JB$^*$-algebras. More concretely, a real (resp., complex) \emph{Jordan algebra} is an algebra $\mathcal{J}$ over the real (resp., complex) field whose product is commutative (but, in general, non-associative) and satisfies the \emph{Jordan identity}:\begin{equation}\label{eq Jordan idenity algebra} (a \circ b)\circ a^2 = a\circ (b \circ a^2).
\end{equation} A JB$^*$-algebra is a complex Jordan algebra $\mathcal{J}$ which is also a Banach space and admits an isometric algebra involution $^*$ satisfying $\| a\circ b\| \leq \|a\| \ \|b\|,$ and $$\|\J a{a^*}a \|= \|a\|^3,$$ for all $a,b\in \mathcal{J}$, where $\J a{a^*}a =2 (a\circ a^*) \circ a - a^2 \circ a^*$. Every C$^*$-algebra is a JB$^*$-algebra with respect to its natural norm and involution and the Jordan product given by $a\circ b = \frac12( a b +b a)$. The self-adjoint part $\mathcal{J}_{sa}$ of a JB$^*$-algebra $\mathcal{J}$ is a real Jordan Banach algebra which satisfies $$\|a\|^2 = \|a^2\| \hbox{ and } \|a^2\|\leq \|a^2+b^2\|,$$ for every $a,b\in \mathcal{J}_{sa}.$ These axioms provide the precise definition of JB-algebras. A \emph{JBW$^*$-algebra} (resp., a \emph{JBW-algebra}) is a JB$^*$-algebra (resp., a JB-algebra) which is also a dual Banach space. The bidual of every JB$^*$-algebra is a JBW$^*$-algebra with a Jordan product and involution extending the original ones. The reader is referred to the monograph \cite{Hanche} for the basic background on JB$^*$- and JB-algebras.\smallskip

Let $\mathcal{B}$ be a JB$^*$-subalgebra of a JB$^*$-algebra $\mathcal{J}$. Accordingly to the notation in \cite{AlBreExVill09,BurFerGarPe2014,BurFerPe2014} a linear mapping $G: \mathcal{B}\to \mathcal{J}$ will be called a \emph{generalized Jordan derivation} if there exists $\xi\in \mathcal{J}^{**}$ satisfying \begin{equation}\label{eq generalized Jordan derivation} G (a\circ b) = G(a)\circ b + a\circ G(b) - U_{a,b} (\xi ),
 \end{equation} for all $a,b$ in $\mathcal{J}$, where $U_{a,b} (x) := (a\circ x) \circ b + (b\circ x)\circ a - (a\circ b) \circ x$ ($x\in \mathcal{J}$). We shall write $U_a$ instead of $U_{a,a}$. If $\mathcal{B}$ is unital, every generalized Jordan derivation $G : \mathcal{B}\to \mathcal{J}$ with $G(1) =0$ is a Jordan derivation. Jordan derivations are generalized Jordan derivations.\smallskip

\section{Triple derivations at fixed point of a C$^*$-algebra}

In this section we shall study linear maps between C$^*$-algebras which are triple derivations at a fixed point of the domain. There are two remarkable elements that every study should consider in a first stage, we refer to zero and the unit element of a C$^*$-algebra. We shall show later that linear maps between C$^*$-algebras which are triple derivations at zero or at the unit element are intrinsically related to generalized derivations.\smallskip

Let $T: A \to X$ be a bounded linear operator from a C$^*$-algebra into an essential Banach $A$-bimodule. It is proved in \cite[Theorem 4.5]{AlBreExVill09} and \cite[Proposition 4.3]{BurFerPe2014} (see also \cite[Theorem 2.11]{AyuKudPe2014}) that $T$ is a generalized derivation if and only if one of the next statements holds:\begin{enumerate}[$(a)$]
\item $T$ is a generalized derivation;
\item $a T(b) c = 0$, whenever $ab=bc=0$ in $A$;
\item $a T(b) c = 0$, whenever $ab=bc=0$ in $A_{sa}$.
\end{enumerate}

When in the above statement $X$ coincides with $A$ or with any C$^*$-algebra containing $A$ as a C$^*$-subalgebra with the same unit, the above equivalent statements admit another reformulation which is more interesting for our purposes. We shall isolate here an equivalence which was germinally contained in the proof of \cite[Theorem 2.8]{EssaPeRa16}. More concretely, each statement in $(a)$-$(c)$ is equivalent to any of the following:\label{eq reformulations of g der}
\begin{enumerate}[$(d)$]
\item[$(d)$] $a T(b) c + c T(b) a = 0$, whenever $ab=bc=0$ in $A_{sa}$;
\item[$(e)$] $a T(b) a = 0$, whenever $ab=0$ in $A_{sa}$.
\item[$(f)$] For each $b$ in $A_{sa}$ we have $(1-r(b)) T(b) (1-r(b))=0$ in $B^{**}$, where $r(b)$ denotes the range projection of $b$ in $A^{**}$.
\end{enumerate}

For the proof we observe that $(c)\Rightarrow (d)$ and $(d)\Rightarrow (e)$ are clear. We shall prove $(e)\Rightarrow (f)$. Suppose  $a T(b) a = 0$, whenever $ab=0$ in $A_{sa}$. We shall focus on the commutative C$^*$-subalgebra $A_b$ generated by $b$.\smallskip

It is known from the Gelfand theory that $A_b \cong C_0(\sigma(b))$, where $\sigma (b)\subseteq [-\|b\|,\|b\|]$ denotes the spectrum of $b$ and $C_0(\sigma(b))$ the C$^*$-algebra of all continuous functions on $\sigma(b)$ vanishing at zero. For each natural $n$, let $p_n$ denote the projection in $A_b^{**}\subseteq A^{**}$ corresponding to the characteristic function of the set $([-\|b\|, -\frac1n]\cup  [\frac1n,\|b\|])\cap \sigma(b).$ Let us also pick a function $b_n\in A_b$ such that $b_n p_n = p_n b_n = b_n=b_n^*$ and $\|b_n-b\|< \frac1n$. Clearly, $(p_n)$ converges to $r(b)$, the range projection of $b$ in the strong$^*$-topology of $A^{**}$ (see \cite[\S 1.8]{S}). Let us take $z \in ((1-p_n) A^{**} (1-p_n))\cap A_{sa}$. Since $b_n z = 0$, it follows from the hypothesis that $z T(b_n) z =0$.\smallskip

On the other hand, it is known that $p_n$ is a closed projection in $A_b^{**}\subseteq A^{**}$ in the sense employed in \cite[Definition III.6.19]{Tak}. It is known that, under these circumstances, $((1-p_n) A^{**} (1-p_n))\cap A_{sa}$ is weak$^*$-dense in $(1-p_n) A^{**} (1-p_n)$ (compare \cite[Proposition 3.11.9]{Ped}). By Kaplansky density theorem \cite[Theorem 1.9.1]{S},  we can find a bounded net $(z_\lambda)$ in $((1-p_n) A^{**} (1-p_n))\cap A_{sa}$ converging to $1-p_n$ in the strong$^*$-topology of $A^{**}$. We have seen above that $z_\lambda T(b_n) z_\lambda =0 $ for all $\lambda$. Since the product of $A$ is jointly strong$^*$-continuous (cf. \cite[Proposition 1.8.12]{S}), we deduce that $(1-p_n) T(b_n) (1-p_n) =0 $ for all natural $n$. Since $(1-p_n)\to 1-r(b)$ in the strong$^*$-topology and $T(b_n) \to T(b)$ in norm, we have $(1-r(b)) T(b) (1-r(b))=0$ in $A^{**}$.  \smallskip

$(f)\Rightarrow (c)$ We take $a,b,c\in A_{sa}$ with $a b = bc=0$. We can easily see that $a = a (1-r(b))$ and $c = (1-r(b)) c$. Therefore, by assumptions, $a T(b) c = a (1-r(b)) T(b) (1-r(b)) c = 0$, which finishes the proof.\medskip\medskip

\subsection{Triple derivations at the unit element of a C$^*$-algebra}\  \vspace{2mm}

Along the rest of this subsection, the symbol $A$ will denote a C$^*$-subalgebra of unital C$^*$-algebra $B$, and we shall assume that $A$ contains the unit of $B$.\smallskip

Continuous linear maps $T: A\to B$ which are derivations at 1 are derivations. This problem has been already studied in the literature, at least for continuous linear maps (see \cite[Theorem 2.1 or Corollary 2.3]{Lu2009}). Actually the next result follows from the just quoted reference and \cite[Theorem 6.3]{John96}.

\begin{proposition}\label{p cont derivation at 1}{\rm(\cite[Theorem 2.1 or Corollary 2.3]{Lu2009}, \cite[Theorem 6.3]{John96})} Let $A$ be a unital C$^*$-algebra, and $X$ be a unital Banach $A$-bimodule. Suppose $T: A\to B$ is a continuous linear map which is a derivation at the unit element. Then $T$ is a derivation.$\hfill\Box$
\end{proposition}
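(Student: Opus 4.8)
My plan is to bootstrap the \emph{a priori} weak hypothesis, derivability only at factorizations of the unit, up to the square identity $T(b^2)=bT(b)+T(b)b$ valid for every $b\in A$, and then to convert the resulting Jordan derivation into an associative one.

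First I would extract the value at the unit: testing the derivability condition on the factorization $1=1\cdot 1$ gives $T(1)=T(1)+T(1)$, so $T(1)=0$. Next, for any invertible $u\in A$ the factorization $u\,u^{-1}=1$ together with $T(1)=0$ yields $0=T(u)u^{-1}+uT(u^{-1})$, that is, $T(u^{-1})=-u^{-1}T(u)\,u^{-1}$. The key move is to feed resolvents into this relation: for a fixed $b\in A$ and a scalar $\lambda$ with $|\lambda|>\|b\|$ the element $u=\lambda 1+b$ is invertible, and since $T(1)=0$ we have $T(u)=T(b)$, whence $T((\lambda 1+b)^{-1})=-(\lambda 1+b)^{-1}T(b)(\lambda 1+b)^{-1}$.

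I would then expand both sides as norm-convergent Neumann series in $\lambda^{-1}$, using the continuity of $T$ to move it inside the series on the left-hand side. Both sides are analytic functions of $\lambda^{-1}$ in a neighbourhood of the origin, with values in the bimodule, so I may compare coefficients. The coefficients of $\lambda^{-1}$ and $\lambda^{-2}$ merely recover $T(1)=0$ and a trivial identity, while the coefficient of $\lambda^{-3}$ produces $T(b^2)=bT(b)+T(b)b$. Since $b$ was arbitrary, polarizing (replace $b$ by $b+c$ and cancel the pure-square terms) gives $T(b\circ c)=T(b)\circ c+b\circ T(c)$ for all $b,c\in A$, so $T$ is a continuous Jordan derivation. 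To finish I would apply \cite[Theorem 6.3]{John96}, by which every continuous Jordan derivation from a C$^*$-algebra into a Banach bimodule is an associative derivation.

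The routine steps here are the two specializations of the hypothesis and the coefficient comparison; the substantial input is the last one. Converting a Jordan derivation into an associative derivation is exactly where the C$^*$-structure (through symmetric amenability in Johnson's argument) is indispensable, as the implication fails over a general Banach algebra, so I expect this to be the main obstacle rather than anything in the elementary bootstrap. An alternative route, closer to \cite{Lu2009}, would bypass the Jordan detour by proving directly the orthogonality characterization of generalized derivations recorded in $(a)$--$(f)$ above and then using $T(1)=0$ to annihilate the associated defect element $\xi$.
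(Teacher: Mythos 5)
Your proof is correct: the resolvent/Neumann-series bootstrap legitimately yields $T(b^2)=bT(b)+T(b)b$ for every $b$, hence by polarization a continuous Jordan derivation, and \cite[Theorem 6.3]{John96} then upgrades this to an associative derivation. This is essentially the same route the paper takes — the paper offers no proof of its own but cites \cite[Theorem 2.1 or Corollary 2.3]{Lu2009} for precisely the ``derivable at $1$ $\Rightarrow$ Jordan derivation'' step (Lu's argument likewise runs through invertible elements) and \cite[Theorem 6.3]{John96} for the final upgrade, so your proposal just makes the cited steps explicit.
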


A common property of triple derivations and local triple derivations on C$^*$-algebras is that they map the unit of the domain C$^*$-algebra into a skew symmetric element (cf. \cite[proof of Lemma 1]{HoMarPeRu}, \cite[Lemma 3.4]{HoPeRu} or \cite[Lemma 2.1]{KuOikPeRu14}). This good behavior is also true for linear maps which are derivations at the unit element.

\begin{lemma}\label{lemma33} Let $T:A\to B$ be a triple derivation at the unit of $A.$
Then the following statements hold:
\begin{enumerate}[$(a)$]
    \item $T(1)^*=-T(1);$
\item The identity $T(p)=T(p)p+pT(p)-pT(1)p,$ holds for every projection $p$ in $A.$
\end{enumerate}
\end{lemma}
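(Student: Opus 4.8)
The plan is to handle each statement by exhibiting an explicit factorization of the unit as a triple product and then feeding it into the defining identity of a triple derivation at $1$, namely $T(1)=\{T(a),b,c\}+\{a,T(b),c\}+\{a,b,T(c)\}$ whenever $1=\{a,b,c\}$.

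For $(a)$ I would use the tautological factorization $1=\{1,1,1\}$, valid because $\{1,1,1\}=\frac12(1\cdot 1^*\cdot 1+1\cdot 1^*\cdot 1)=1$. Applying the hypothesis and computing the three resulting terms gives $\{T(1),1,1\}=T(1)$, $\{1,T(1),1\}=T(1)^*$, and $\{1,1,T(1)\}=T(1)$, so the identity collapses to $T(1)=2\,T(1)+T(1)^*$. Cancelling yields $T(1)^*=-T(1)$, which is $(a)$.

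For $(b)$ the only genuinely creative step is to choose a factorization of $1$ that actually involves the projection $p$. I would set $s:=2p-1$, a self-adjoint unitary (so $s^*=s$ and $s^2=1$), and use $1=\{s,1,s\}=s^2$. The triple derivation identity then reads $T(1)=\{T(s),1,s\}+\{s,T(1),s\}+\{s,1,T(s)\}$. Computing the triple products and using part $(a)$ to rewrite the middle term $\{s,T(1),s\}=s\,T(1)^*\,s=-s\,T(1)\,s$, this becomes $T(1)=T(s)\,s+s\,T(s)-s\,T(1)\,s$. Substituting $s=2p-1$ and $T(s)=2\,T(p)-T(1)$ (by linearity) and expanding, all occurrences of $T(1)p$ and $pT(1)$ cancel against one another, the scalar multiples of $T(1)$ reduce to a single $T(1)$, and after removing the common $T(1)$ from both sides and dividing by $4$ one is left precisely with $T(p)=T(p)\,p+p\,T(p)-p\,T(1)\,p$.

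The computation in $(b)$ is purely mechanical once the factorization is in hand — it amounts to expanding nine monomials and bookkeeping the cancellations — so I expect the only real obstacle to be spotting the symmetry $s=2p-1$ as the device that converts the hypothesis \emph{at the unit} into information about the value $T(p)$. Both arguments are self-contained and require nothing beyond the definition of triple derivation at a point, together with part $(a)$ being used inside the proof of part $(b)$.
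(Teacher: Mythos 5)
Your proposal is correct and follows essentially the same route as the paper: part $(a)$ from the factorization $1=\{1,1,1\}$, and part $(b)$ by plugging the self-adjoint symmetry attached to $p$ into $1=\{s,1,s\}$ and using $(a)$ to handle the middle term (the paper writes $s=1-2p$ rather than your $2p-1$, an immaterial sign since $s$ occupies both outer slots). The expansion and cancellations you describe match the paper's computation exactly.
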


\begin{proof}$(a)$ Since $1=\{1,1,1\},$ by assumptions, we have
$$T(1)=T(\{1,1,1\})= 2\{T(1),1,1\}+\{1,T(1),1\}=2T(1)+T(1)^*,$$
which proves the statement. \smallskip

$(b)$ Let $p\in A$ be a projection. The identity $\{(1-2p),1,(1-2p)\}=1$ and the hypothesis prove that
$$T(1)=T(\{(1-2p),1,(1-2p)\})$$ $$ =2\{T(1-2p),1,(1-2p)\}+\{(1-2p),T(1),(1-2p)\}$$
$$=(T(1)-2T(p))(1-2p)+(1-2p)(T(1)-2T(p))+(1-2p)T(1)^*(1-2p)$$
$$=2T(1)-4T(p)+4T(p)p+4pT(p)-2T(1)p$$
$$-2pT(1)-2T(1)^*p-2pT(1)^*+4pT(1)^*p+T(1)^*$$
$$=\hbox{(by $(1)$)} = T(1)- 4T(p) + 4 T(p) p + 4 p T(p) -4 p T(1) p.$$
This implies that $T(p)=T(p)p+pT(p)-pT(1)p.$
\end{proof}

There exist C$^*$-algebras containing no non-zero projections. For this reason, we need to deal with unitaries.

\begin{theorem}\label{t triple derivation at 1 are generalized derivations}
Let $T:A\to B$ be a continuous linear map which is a triple derivation at the unit of $A.$
Then $T$ is a generalized derivation.
\end{theorem}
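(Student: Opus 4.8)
The plan is to reduce the hypothesis ``triple derivation at $1$'' to one of the equivalent characterizations of generalized derivations collected on page \pageref{eq reformulations of g der}, specifically condition $(e)$: that $a T(b) a = 0$ whenever $ab = 0$ in $A_{sa}$. Since $T$ is assumed continuous and $B$ is a C$^*$-algebra containing $A$ with the same unit (hence an essential unital Banach $A$-bimodule), verifying $(e)$ suffices to conclude that $T$ is a generalized derivation. The work therefore concentrates on extracting enough Leibniz-type identities from the single point $z=1$.

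First I would exploit the flexibility of writing $1$ as a triple product of symmetries. The key observation already appearing in Lemma \ref{lemma33} is that for any symmetry $s$ (an element with $s=s^*$ and $s^2=1$, such as $1-2p$ for a projection $p$, or more generally any self-adjoint unitary) one has $\{s,1,s\}=1$ and $\{1,s,s\}=s^2=1$ (using $s^*=s$), so $T$ must satisfy the corresponding derivation identity at each such factorization. Evaluating $T(1)=T(\{s,1,s\})$ and $T(1)=T(\{s,s,1\})$, and using part $(a)$ of Lemma \ref{lemma33} that $T(1)^*=-T(1)$, should yield an identity relating $T(s)$, $s$, and $T(1)$ of the shape $T(s) = T(s) s \cdot(\text{something}) + \cdots - s\,T(1)\, s$, analogous to the projection formula in Lemma \ref{lemma33}$(b)$ but now valid for every symmetry. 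Because symmetries of the form $e^{i\theta}$ applied to self-adjoint unitaries, and more usefully the unitaries $u=\exp(ih)$ for $h\in A_{sa}$, span a norm-dense enough set, I would then promote these discrete identities to a functional identity.

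Next I would pass from symmetries to arbitrary self-adjoint elements by a standard continuity-plus-spectral argument. The natural route is to use the one-parameter families: for $h\in A_{sa}$ the unitaries $u_t=\exp(ith)$ satisfy $\{u_t, u_t, 1\}=u_t^2$ and similar relations, so differentiating the resulting identities in $t$ at $t=0$ (legitimate by continuity of $T$) converts the unitary-group identities into derivation identities at the Lie-algebra level, producing the linear relation $T(h^2)$ versus $T(h)h + hT(h) - hT(1)h$ for $h\in A_{sa}$. This is exactly the Jordan-type identity that, after subtracting the $T(1)$ term, matches the hypothesis of the characterization $(e)$: if $ab=0$ with $a,b\in A_{sa}$, one sets $h=a+b$ or uses $b$ together with a self-adjoint complement orthogonal to it and reads off $aT(b)a=0$ from the derived quadratic identity. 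The continuity of $T$ is essential both to justify differentiation and to take the norm limits that replace projections (which may not exist) by spectral approximants, exactly as in the proof of $(e)\Rightarrow(f)$ above.

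The main obstacle I anticipate is the transition from the closed set of symmetries/unitaries to all self-adjoint elements in a C$^*$-algebra that may contain no nontrivial projections. In a von Neumann algebra one could simply invoke density of linear combinations of projections, but here the argument must be carried out intrinsically via the exponential map and spectral calculus, and one must be careful that the quadratic identity obtained by differentiation is the full polarized Leibniz identity and not merely its diagonal. I expect the delicate point to be the polarization step: recovering the bilinear relation governing $aT(b)a$ from the quadratic one requires replacing $h$ by $h+\lambda k$ and comparing coefficients, and one must check that the cross terms assemble into precisely the expression appearing in condition $(d)$ or $(e)$, with the $-sT(1)s$ correction term tracking the value $\xi=T(1)\in B^{**}$ that will serve as the witness making $T$ a generalized derivation.
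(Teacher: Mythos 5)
Your plan follows the same skeleton as the paper's proof: invoke Lemma \ref{lemma33}$(a)$ to get $T(1)^*=-T(1)$, differentiate the identity coming from a factorization of $1$ through the unitary groups $u_t=e^{ith}$, $h\in A_{sa}$ (continuity of $T$ justifying the differentiation), so as to obtain the quadratic identity $T(h^2)=T(h)h+hT(h)-hT(1)h$ for $h\in A_{sa}$, and then verify one of the orthogonality conditions $(c)$--$(e)$ on page \pageref{eq reformulations of g der}. Two remarks on the middle step: your identity ``$\{u_t,u_t,1\}=u_t^2$'' is false (in fact $\{u_t,u_t,1\}=1$ because $u_tu_t^*=1$), and differentiating $1=\{u_t,u_t,1\}$ twice yields only the hermitian-part relation $T(h^2)+T(h^2)^*=2\bigl(T(h)\circ h+h\circ T(h)^*\bigr)$; to recover the full quadratic identity one must either combine this with the first-derivative relation $T(h)^*=T(h)-2\,T(1)\circ h$, or, as the paper does, differentiate the factorization $1=\{e^{ith},1,e^{-ith}\}$ instead, which gives the identity in one stroke. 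These points are repairable.

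The genuine gap is in your final step, which you yourself flag as the delicate point but do not resolve. Take $a,b\in A_{sa}$ with $ab=0$ (hence $ba=0$). Setting $h=a+b$, so that $h^2=a^2+b^2$, and subtracting the quadratic identities for $a$, $b$ and $a+b$ gives the bilinear relation $T(a)b+T(b)a+aT(b)+bT(a)-aT(1)b-bT(1)a=0$; multiplying on both sides by $a$ and using $ab=ba=0$ yields only $aT(b)a^2+a^2T(b)a=0$, \emph{not} condition $(e)$, $aT(b)a=0$, and nothing in your sketch removes the extra powers of $a$. The idea that closes this gap --- and it is exactly what the paper does --- is to decompose $b$ rather than polarize: write $b=b^+-b^-$ with $b^+b^-=0$ and $b^{\sigma}\geq 0$, choose $d^{\sigma}\geq 0$ with $(d^{\sigma})^2=b^{\sigma}$; then $a\,d^{\sigma}=d^{\sigma}a=0$ (respectively $a\,d^{\sigma}=d^{\sigma}c=0$ when $ab=bc=0$ in $A_{sa}$), and applying the quadratic identity to $d^{\sigma}$ gives $aT(b^{\sigma})a=aT(d^{\sigma})d^{\sigma}a+a\,d^{\sigma}T(d^{\sigma})a-a\,d^{\sigma}T(1)d^{\sigma}a=0$ term by term, since every summand contains a factor $a\,d^{\sigma}$ or $d^{\sigma}a$. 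This verifies condition $(c)$ directly, with no polarization at all, and \cite[Theorem 2.11]{AyuKudPe2014} then finishes the proof.
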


\begin{proof} Let us take $a\in A_{sa}.$ Since, for each $t\in \RR,$ $e^{it a}$ is a unitary element in $A$ and $1=\{e^{i t a},\;1,e^{-i t a}\},$ we deduce that $$T(1)= \{T(e^{i t a}),\;1,e^{-i t a}\}+\{e^{i t a},\;T(1),e^{-i t a}\}+\{e^{i t a},\;1,T(e^{-i t a})\}.$$
Taking the first derivative in $t$ we get $$0=\{T(ae^{i t a}),\;1,e^{-i t a}\}-\{T(e^{i t a}),\;1,ae^{-i t a}\}+\{ae^{i t a},\;T(1),e^{-i t a}\}$$
$$-\{e^{i t a},\;T(1),ae^{-i t a}\}+
\{ae^{i t a},\;1,T(e^{-i t a})\}-\{e^{i t a},\;1,T(ae^{-i t a})\}$$
for every $t\in \RR.$ Taking a new derivative at $t=0$ in the last equality, we get
$$0=2 \{T(a^2 ),\;1,1\} - 4 \{T(a),\;1,a \} + 2 \{T(1),\;1,a^2\} + 2 \{a^2 ,\;T(1),1 \}$$ $$ - 2 \{a ,\;T(1), a\},$$ or equivalently,
$$2T(a^2)=2T(a)a+2aT(a)+2aT(1)^*a-T(1)a^2-a^2T(1)-T(1)^*a^2-a^2T(1)^*.$$
Lemma \ref{lemma33}$(a)$ assures that $T(1)^*=-T(1),$ which implies that
\begin{equation}\label{eq 1 theorem 1}
 T(a^2)=T(a)a+aT(a)-aT(1)a,
\end{equation}
for every $a$ in $A_{sa}.$\smallskip

Finally, let us take $a,b,c\in A_{sa}$ with $a b=0= bc$. If we write $b= b^+ - b^-$ with $b^+ b^- = 0$ and $b^{\sigma} \geq 0$ for all $\sigma\in \{ \pm \}$. Find $d^{\sigma} \geq 0$ in $A$ such that $(d^{\sigma})^2 = b^{\sigma}$ ($\sigma=\pm$). It is not hard to check (for example, by applying the orthogonality of the corresponding range projections) that $a d^{\sigma} = d^{\sigma} c=0$ for $\sigma = \pm$. Now applying \eqref{eq 1 theorem 1} we get $$ a T(b) c = a T(b^+) c -  a T(b^-) c $$ $$= a( T(d^+) d^+ +d^+ T(d^+) )c - a( T(d^-) d^- + d^- T(d^-) )c =0.$$ We deduce from \cite[Theorem 2.11]{AyuKudPe2014} that $T$ is a generalized derivation.
\end{proof}

There exists generalized derivations which are not triple derivations at 1. For example, let $a$ be a non-zero symmetric element in $A$ and define $T(x)= a x$ ($\forall x\in A$). Then $T( xy ) = a x y = T(x) y + x T(y) - x T(1) y = a x y +x a y - x a y,$ for all $x,y\in A$, which assures that $T$ is a generalized derivation. However, $T(1) = a \in A_{sa}\backslash \{0\}$ together with Lemma \ref{lemma33} assure that $T$ is not a triple derivation at $1$.\smallskip

An appropriate change in the arguments given in the above theorem provide additional information when $T(1)=0$.

\begin{proposition}\label{symmetric} Let  $T:A\to A$ be a continuous linear map which is a triple derivation at 1 with $T(1)=0.$
Then $T$ is a $^*$-derivation and a triple derivation.
\end{proposition}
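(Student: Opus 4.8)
The plan is to leverage Theorem~\ref{t triple derivation at 1 are generalized derivations} together with the hypothesis $T(1)=0$ to collapse the generalized derivation structure into an honest derivation, and then to promote this to a $^*$-derivation (hence a triple derivation). First I would invoke the preceding theorem to conclude that $T$ is a generalized derivation, so there exists $\xi\in B^{**}$ with $T(ab)=T(a)b+aT(b)-a\xi b$ for all $a,b\in A$. Setting $a=b=1$ gives $T(1)=T(1)+T(1)-\xi$, whence $\xi=2T(1)-T(1)=T(1)=0$. Therefore the defect term vanishes and $T$ is already an (associative) derivation on $A$.

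It remains to show that this derivation is symmetric, i.e.\ $T(a^*)=T(a)^*$ for all $a\in A$. The natural route is to reuse equation~\eqref{eq 1 theorem 1} from the proof of the previous theorem, which under $T(1)=0$ reads $T(a^2)=T(a)a+aT(a)$ for every $a\in A_{sa}$. I would differentiate the one-parameter unitary identity once more, but now extract the \emph{imaginary} information rather than the second-order Leibniz relation. Concretely, starting again from $T(1)=\{T(e^{ita}),1,e^{-ita}\}+\{e^{ita},T(1),e^{-ita}\}+\{e^{ita},1,T(e^{-ita})\}$ with $T(1)=0$, taking a single derivative at $t=0$ should produce a first-order identity relating $T(a)$ and $T(a)^*$ for self-adjoint $a$. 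Explicitly the triple-product terms unfold into expressions involving $T(a)^*$ (coming from the $b^*$ slot of $\{a,b,c\}$), and the resulting linear relation is designed to force $T(a)=-T(a)$ to cancel unless $T(a)^*=T(a)$ on $A_{sa}$. Once $T$ maps self-adjoints to self-adjoints, the $^*$-property on all of $A$ follows by writing $a=h+ik$ with $h,k\in A_{sa}$ and using linearity.

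The final step is to conclude that a $^*$-derivation on a C$^*$-algebra is automatically a triple derivation. This is precisely the standard fact already quoted in the introduction (``every $^*$-derivation on a C$^*$-algebra is a triple derivation''), and it follows by a direct computation: expanding $\delta\{a,b,c\}=\tfrac12\delta(ab^*c+cb^*a)$ via the Leibniz rule and replacing $\delta(b^*)$ by $\delta(b)^*$ reassembles exactly into $\{\delta(a),b,c\}+\{a,\delta(b),c\}+\{a,b,\delta(c)\}$. So no new obstacle arises here.

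The main obstacle I anticipate is the symmetry step: making the first-derivative computation at $t=0$ actually yield $T(a)^*=T(a)$ cleanly on $A_{sa}$. The triple-product slots mix $T(a)$ and $T(a)^*$ in a way that requires care to isolate, and one must verify that the terms involving $T(1)$ truly drop out (which they do, since $T(1)=0$) so that the surviving identity is a genuine constraint and not a tautology. If the single first derivative does not by itself separate the real and skew parts, the fallback is to combine it with the already-established derivation property $T(a^2)=T(a)a+aT(a)$ on $A_{sa}$: comparing this with the $^*$-adjoint relation $T(a^2)^*=a T(a)^*+T(a)^*a$ (using that $a^2$ is self-adjoint, so one can also compute $T(a^2)$ directly) should pin down $T(a)^*=T(a)$. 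I would verify that at least one of these two approaches forces symmetry, and I expect the computation to close without needing any structure beyond continuity and the unital C$^*$-algebra axioms.
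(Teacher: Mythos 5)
Your first step (Theorem~\ref{t triple derivation at 1 are generalized derivations} gives a generalized derivation, and $\xi=T(1)=0$ collapses it to an honest derivation) and your last step ($^*$-derivations are triple derivations) are both fine and agree with the paper. The gap is in the symmetry step, and it is genuine. The identity you propose to differentiate, $T(1)=\{T(e^{ita}),1,e^{-ita}\}+\{e^{ita},T(1),e^{-ita}\}+\{e^{ita},1,T(e^{-ita})\}$, cannot produce any information about adjoints: in the triple product $\{a,b,c\}=\frac12(ab^*c+cb^*a)$ only the \emph{middle} slot is conjugated, and in your factorization the middle slots contain $1$ and $T(1)=0$. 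So no term $T(\cdot)^*$ ever appears; with $T(1)=0$ the identity reduces to $0=T(e^{ita})\circ e^{-ita}+e^{ita}\circ T(e^{-ita})$, whose derivative at $t=0$ is $iT(a)-iT(a)=0$, exactly the tautology you feared. Higher derivatives of this same identity only reproduce Jordan-derivation relations such as $T(a^2)=T(a)a+aT(a)$ (this is \eqref{eq 1 theorem 1} with $T(1)=0$), never symmetry.

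Your fallback also fails, for a structural reason: it uses nothing beyond the derivation property and $T(1)=0$, and these do not imply symmetry. For instance, on $M_2(\mathbb{C})$ the inner derivation $D_z(x)=zx-xz$ with $z=z^*$ non-central is a continuous derivation with $D_z(1)=0$, yet $D_z(a)^*\neq D_z(a^*)$ in general; comparing $T(a^2)=T(a)a+aT(a)$ with its adjoint $T(a^2)^*=aT(a)^*+T(a)^*a$ relates two quantities neither of which is known to be self-adjoint, so nothing is pinned down. The missing idea --- and what the paper actually does --- is to choose a factorization of $1$ that places the unitary in the conjugated middle slot: from $1=\{e^{ita},e^{ita},1\}$ and $T(1)=0$ one gets
\begin{equation*}
0=\{T(e^{ita}),e^{ita},1\}+\{e^{ita},T(e^{ita}),1\}
 =T(e^{ita})\circ e^{-ita}+e^{ita}\circ T(e^{ita})^*,
\end{equation*}
and a single derivative at $t=0$ yields $0=T(a)-T(1)\circ a+a\circ T(1)^*-T(a)^*=T(a)-T(a)^*$ for all $a\in A_{sa}$. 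With that substitution your outline closes; without it, the symmetry claim is unsupported.
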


\begin{proof} As in the above proof, let us take $a\in A_{sa}.$ Since, for each $t\in \RR,$ $e^{it a}$ is a unitary element in $A$ and $1=\{e^{i t a},e^{i t a}, 1\},$ we deduce that $$0=T(1)= \{T(e^{i t a}),e^{i t a}, 1\}+\{e^{i t a}, T(e^{i t a}), 1\}+\{e^{i t a}, e^{i t a}, T(1) \}, $$ that is, $$ 0= T(e^{i t a})\circ e^{-i t a} + e^{i t a}\circ T(e^{i t a})^*.$$ Taking derivatives at $t=0$ we get $$0= T(a) - T(1) \circ a + a \circ T(1)^* - T(a)^*,$$ which proves that $T(a) = T(a)^*$ for all $a\in A_{sa}$, and thus $T$ is a symmetric map.\smallskip

Finally, by Theorem \ref{t triple derivation at 1 are generalized derivations}, $T$ is a generalized derivation. Furthermore, since $T(1)=0$ and $T$ is a symmetric operator, we deduce that $T$ is a $^*$-derivation and a triple derivation as well.
\end{proof}

\begin{corollary}\label{c derivable at 1 are triple derivations} Let $T:A\to A$ be a continuous linear map on a unital C$^*$-algebra which is a triple derivation at 1.  Then $T$ is a triple derivation.
\end{corollary}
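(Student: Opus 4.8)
The plan is to reduce Corollary~\ref{c derivable at 1 are triple derivations} to the already-established Proposition~\ref{symmetric} by splitting off the ``bad part'' of $T$ coming from $T(1)$. By Lemma~\ref{lemma33}$(a)$ we know that $w := T(1)$ is skew-symmetric, i.e. $w^* = -w$. The key idea is to manufacture a canonical triple derivation that reproduces this value at the unit, so that subtracting it off leaves a map vanishing at $1$.

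The natural candidate is the inner triple derivation $\delta(x) := \{w,1,x\} + \{x,1,w\}^{\star}$-type expression; concretely I would set $\delta_w(x) := \frac12(w x + x w) \cdot(\text{suitable sign})$ so that $\delta_w(1) = w$ and $\delta_w$ is a genuine triple derivation. More precisely, for a skew-symmetric $w$ the map $x \mapsto w \circ x = \frac12(wx+xw)$ satisfies $\delta_w(1)=w$, and because $w^*=-w$ one checks directly from the definition of the triple product \eqref{eq C*-triple product} that $x\mapsto w\circ x$ obeys the generalized Leibnitz rule $\delta_w\{a,b,c\} = \{\delta_w(a),b,c\}+\{a,\delta_w(b),c\}+\{a,b,\delta_w(c)\}$; this is exactly the statement, recalled in the introduction, that $x \mapsto i\{a,a,x\}$ and more generally the multiplication operators by skew elements are triple derivations. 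Thus $\delta_w$ is a triple derivation on all of $A$, hence in particular a triple derivation at $1$.

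Next I would consider $S := T - \delta_w$. Since the set of linear maps which are triple derivations at $1$ is a subspace (as noted after Definition~\ref{def triple derivation at a point}), and both $T$ and $\delta_w$ are triple derivations at $1$, their difference $S$ is again a continuous linear triple derivation at $1$. Moreover $S(1) = T(1) - \delta_w(1) = w - w = 0$. Now Proposition~\ref{symmetric} applies directly to $S$ and yields that $S$ is a $^*$-derivation and, crucially for us, a triple derivation. Finally, $T = S + \delta_w$ is a sum of two triple derivations, and since triple derivations on $A$ form a linear space, $T$ is a triple derivation, which is the desired conclusion.

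The main obstacle I anticipate is verifying cleanly that $\delta_w(x) = w\circ x$ (equivalently the appropriate inner map) is genuinely a triple derivation when $w^*=-w$, and in particular getting the algebra of the involution to cooperate in the triple-product Leibnitz identity; the skew-symmetry $w^*=-w$ is precisely what makes the cross terms cancel, so the computation hinges on that hypothesis supplied by Lemma~\ref{lemma33}$(a)$. A secondary point to confirm is that $\delta_w$ inherits continuity (immediate, since it is multiplication by a fixed element) and that the space of triple derivations is closed under the linear operations used, so that both the subtraction step and the final re-addition step are legitimate. Once the inner triple derivation is in hand, the rest is a short formal argument reducing everything to Proposition~\ref{symmetric}.
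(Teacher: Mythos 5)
Your proposal is correct and follows essentially the same route as the paper: subtract an inner triple derivation matching $T$ at the unit, apply Proposition~\ref{symmetric} to the difference, and add it back. Indeed, your map $\delta_w(x)=w\circ x$ with $w=T(1)$, $w^*=-w$, coincides exactly with the paper's $\frac{1}{2}\delta(T(1),1)(x)=\frac{1}{2}\left(\{T(1),1,x\}-\{1,T(1),x\}\right)$, since $\{1,w,x\}=w^*\circ x=-w\circ x$.
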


\begin{proof} Since, by Lemma \ref{lemma33}, $T(1)^* =-T(1),$ it is known that the mapping $\delta(T(1),1):A \to B,$ $\delta(T(1),1) (x) = \{T(1),1,x\}- \{1,T(1),x\}$ is a triple derivation (compare \cite[Proof of Lemma 1]{HoMarPeRu}). Since the linear combination of linear maps which are triple derivations at $1$ is a triple derivation at $1$, the mapping $\tilde{T}=T-\frac{1}{2}\delta(T(1),1)$ is a triple derivation at 1, and $\tilde{T} (1) = 0$. Applying Proposition \ref{symmetric}, we derive that $\tilde{T}$ is a $^*$-derivation. Therefore $T= \tilde{T} +\frac{1}{2}\delta(T(1),1)$ is a triple derivation.
\end{proof}

\begin{problem}
If $T:A\to B$ is a triple derivation at the unit of a unital C$^*$-algebra $A$,  is $T$ continuous?
\end{problem}

\subsection{Triple derivations at zero}\ \vspace{1mm}

We begin this subsection exploring the basic properties of linear maps which are derivations at zero.

\begin{lemma}\label{lemma1.2 associative der} Suppose $A$ is a C$^*$-subalgebra of a C$^*$-algebra $B$, and let $T:A\to B$ be a linear map which is a derivation at zero. Then
$$a T(b) c=0,\quad \forall \;a,\;b,\;c\in A \text{ with } a b= b c=0.$$
\end{lemma}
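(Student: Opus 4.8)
The plan is to apply the derivation-at-zero hypothesis to the two orthogonality relations $ab=0$ and $bc=0$ separately, and then to combine the resulting identities using nothing more than the associativity of the product in $B$. No approximation or weak$^*$-density argument is needed here; the whole statement reduces to a short algebraic manipulation, so I do not expect any serious obstacle.

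First I would record what ``derivation at zero'' means concretely. Since $T$ is linear, $T(0)=0$; thus for any $x,y\in A$ with $xy=0$ the defining identity \eqref{eq derivation} specializes to
$$0 = T(0) = T(x)\,y + x\,T(y),$$
that is, $x\,T(y) = - T(x)\,y$ whenever $xy=0$. Applying this to the pair $(a,b)$ (using $ab=0$) gives $a\,T(b) = - T(a)\,b$, and applying it to the pair $(b,c)$ (using $bc=0$) gives $T(b)\,c = - b\,T(c)$.

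The key step is then to insert one of these two identities into the target expression and use associativity to create a factor of $ab$ or $bc$. For instance, from the second identity,
$$a\,T(b)\,c = a\bigl(T(b)\,c\bigr) = a\bigl(- b\,T(c)\bigr) = -(ab)\,T(c) = 0,$$
since $ab=0$. Equivalently one could substitute the first identity on the left, writing $a\,T(b)\,c = (a\,T(b))\,c = -T(a)\,(bc) = 0$; either route closes the argument. Since the conclusion is an exact equality rather than a limiting statement, the continuity of $T$ plays no role, which is consistent with the fact that the lemma is stated without a continuity hypothesis.
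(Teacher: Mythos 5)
Your proof is correct and follows essentially the same route as the paper: apply the derivation-at-zero identity to one of the vanishing products (using $T(0)=0$ by linearity) and then kill the resulting expression with the other relation via associativity. In fact your alternative substitution, $a\,T(b)\,c = -T(a)\,(bc) = 0$, is word for word the paper's one-line argument.
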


\begin{proof} Suppose $a,b,c\in A$ satisfy the hypothesis of the lemma. Since $T$ is a derivation at zero we have $aT(b) c = (T(ab)-T(a) b) ) c =0.$
\end{proof}

Let us observe that under the hypothesis of the above lemma, we are not in a position to apply \cite[Theorem 4.5]{AlBreExVill09} and \cite[Proposition 4.3]{BurFerPe2014} (see also \cite[Theorem 2.11]{AyuKudPe2014}) and the reformulations we have reviewed in page \pageref{eq reformulations of g der} because $T$ is not assumed to be continuous. We shall see later that continuity can be relaxed when the domain is a von Neumann algebra.\smallskip

Let $a,\;b\in A,$ we recall that $a$ and $b$ are orthogonal (written $a\perp b$) if and only if $ab^*=b^*a=0.$

\begin{lemma}\label{lemma1.2} Suppose $A$ is a C$^*$-subalgebra of a C$^*$-algebra $B$, and
let $T:A\to B$ be a linear map which is a triple derivation at zero. Then
$$\{a,T(b),c\}=0,\quad \forall \;a,\;b,\;c\in A \text{ with } a\perp b \perp c.$$
\end{lemma}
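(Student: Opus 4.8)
The plan is to apply the triple-derivation-at-zero hypothesis directly to the chain $(a,b,c)$. Since $T$ is linear we have $T(0)=0$, so the hypothesis that $T$ is a triple derivation at zero amounts to saying that whenever $\{x,y,z\}=0$ in $A$ one has
$$\{T(x),y,z\}+\{x,T(y),z\}+\{x,y,T(z)\}=0.$$
The strategy is to feed the triple $(a,b,c)$ with $a\perp b\perp c$ into this identity, show that the first and third summands vanish on their own by orthogonality, and thereby read off the middle summand.

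First I would record the algebraic consequences of the orthogonality relations. The condition $a\perp b$ means $ab^*=b^*a=0$, while $b\perp c$ means $bc^*=c^*b=0$; taking adjoints in the latter also gives $cb^*=b^*c=0$. In particular $ab^*c=(ab^*)c=0$ and $cb^*a=(cb^*)a=0$, so $\{a,b,c\}=\frac12(ab^*c+cb^*a)=0$, and the displayed identity above does apply to $(a,b,c)$.

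Next I would check that the two outer terms are zero. For the first, $\{T(a),b,c\}=\frac12(T(a)b^*c+cb^*T(a))$ vanishes because $b^*c=0$ and $cb^*=0$, both coming from $b\perp c$. For the third, $\{a,b,T(c)\}=\frac12(ab^*T(c)+T(c)b^*a)$ vanishes because $ab^*=0$ and $b^*a=0$, both coming from $a\perp b$. Substituting these two vanishings into the identity leaves exactly $\{a,T(b),c\}=0$, which is the desired conclusion.

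The argument is essentially bookkeeping and presents no serious obstacle; the only point requiring care is tracking which of the orthogonality products (and their adjoints) is needed to kill each factor appearing in the two outer triple products. Concretely, one must invoke both halves of $a\perp b$ and both the original and the adjoint form of $b\perp c$, so that no undesired term survives.
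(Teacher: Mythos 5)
Your proof is correct and follows exactly the paper's argument: use the orthogonality relations to verify $\{a,b,c\}=0$, apply the triple-derivation-at-zero hypothesis (with $T(0)=0$), and observe that the outer terms $\{T(a),b,c\}$ and $\{a,b,T(c)\}$ vanish by $b\perp c$ and $a\perp b$ respectively. The only difference is that you spell out the bookkeeping with adjoints that the paper leaves implicit.
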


\begin{proof} Let us take $a,\;b,\;c\in A,$ satisfying $a\perp b \perp c.$ Since $\{a,b,c\}=0,$ it follows from the hypothesis that
$$ 0=\{T(a),b,c\}+\{a,T(b),c\}+\{a,b,T(c)\},$$
which proves the statement because $\{T(a),b,c\}= \{a,b,T(c)\}=0$.
\end{proof}

We can now apply the reformulations of being a generalized derivation proved in page \pageref{eq reformulations of g der}. Let us recall that as observed by J. Zhu, Ch. Xiong, and P. Li in \cite{ZhuXiLi} linear maps which are derivations at zero need not be derivations. We shall see next that continuous linear maps which are derivations at zero are always generalized derivations.

\begin{theorem}\label{t continuous triple der at zero are g der} Let $A$ be a C$^*$-subalgebra of a unital C$^*$-algebra $B$. Let $T:A\to B$ be a bounded linear map. If $T$ is a derivation at zero or a triple derivation at zero, then $T$ is a generalized derivation.
\end{theorem}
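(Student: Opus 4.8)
The plan is to reduce both hypotheses to the key orthogonality condition that characterizes generalized derivations via the reformulations established on page \pageref{eq reformulations of g der}, namely condition $(c)$: that $a\,T(b)\,c = 0$ whenever $ab = bc = 0$ in $A_{sa}$. Since $T$ is assumed bounded, once I verify $(c)$ I may invoke \cite[Theorem 2.11]{AyuKudPe2014} to conclude that $T$ is a generalized derivation, so the entire burden is to establish this orthogonality identity under each of the two hypotheses.

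For the case where $T$ is a derivation at zero, the work is essentially already done by Lemma \ref{lemma1.2 associative der}, which gives $a\,T(b)\,c = 0$ for all $a,b,c \in A$ with $ab = bc = 0$; in particular this holds for self-adjoint $a,b,c$, which is exactly condition $(c)$. So this half requires only citing the lemma and the characterization.

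For the case where $T$ is a triple derivation at zero, I would start from Lemma \ref{lemma1.2}, which yields $\{a, T(b), c\} = 0$ whenever $a \perp b \perp c$. The plan is to translate this triple-product identity into the associative condition $(d)$ or $(e)$ from the reformulations. Given self-adjoint $a, b, c$ with $ab = bc = 0$, I would first observe that for self-adjoint elements the orthogonality relation $a \perp b$ (meaning $ab^* = b^*a = 0$) coincides with $ab = ba = 0$, so the hypotheses $ab = 0 = bc$ among self-adjoint elements give genuine triple-orthogonality $a \perp b \perp c$ after checking the symmetry of the products. Then $\{a, T(b), c\} = \tfrac12\big(a\,T(b)^*\,c + c\,T(b)^*\,a\big) = 0$. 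This is the symmetrized condition $(d)$ but written with $T(b)^*$ in place of $T(b)$; the natural route is to pass to condition $(e)$ by setting $a = c$, giving $a\,T(b)^*\,a = 0$ whenever $ab = 0$ in $A_{sa}$. Applying this to the adjoint of $T$ (or observing that $b \mapsto T(b)^*$ satisfies the same vanishing) lets me recover $a\,S(b)\,a = 0$ for an appropriate related map, from which condition $(e)$ for $T$ itself follows.

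The main obstacle I anticipate is the bookkeeping around the involution: Lemma \ref{lemma1.2} produces $T(b)^*$ sitting inside the triple product, whereas condition $(e)$ wants $T(b)$ itself. I expect to resolve this by noting that for self-adjoint arguments the equation $a\,T(b)^*\,a = 0$ is equivalent, upon taking adjoints, to $a\,T(b)\,a = 0$ (since $a = a^*$), so the conjugate-linear appearance of $T(b)^*$ is harmless once $a$ is self-adjoint. After that, condition $(e)$ holds and the characterization from page \pageref{eq reformulations of g der} together with \cite[Theorem 2.11]{AyuKudPe2014} closes the argument.
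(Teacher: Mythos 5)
Your proposal is correct and follows essentially the same route as the paper: both halves reduce to Lemma \ref{lemma1.2 associative der} (derivation at zero) and Lemma \ref{lemma1.2} (triple derivation at zero), dispose of the unwanted $T(b)^*$ by taking adjoints of an identity among self-adjoint elements, and then invoke the equivalences listed on page \pageref{eq reformulations of g der} together with \cite[Theorem 2.11]{AyuKudPe2014}. The only immaterial difference is that you specialize $c=a$ to land on condition $(e)$, whereas the paper keeps $a$ and $c$ distinct and appeals to condition $(d)$.
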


\begin{proof} If $T$ is a triple derivation at zero, by Lemma \ref{lemma1.2}, given $a,b,c\in A_{sa}$ with $a b = b c =0$ we have $$0=2 \{a,T(b),c\}= a T(b)^* c + c T(b)^* a ,$$ or equivalently, $$0= a T(b) c + c T(b) a .$$ Lemma \ref{lemma1.2 associative der} assures that a similar conclusion holds when $T$ is a derivation at zero.  It follows from the equivalence $(d)\Leftrightarrow (a)$ in page \pageref{eq reformulations of g der} that $T$ is a generalized derivation.
\end{proof}

We observe that Theorem \ref{t continuous triple der at zero are g der} above extends \cite[Theorem 4]{JingLuLi2002} to the setting of unital C$^*$-algebras.

\begin{corollary}\label{c bl triple derivable at zero with T1=0} Suppose $A$ is a unital C$^*$-algebra. Let $T:A\to A$ be a bounded linear map which is a triple derivation at zero with $T(1)=0.$ Then $T$ is a $^*$-derivation, and hence a triple derivation.
\end{corollary}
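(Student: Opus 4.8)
The plan is to first upgrade $T$ to an associative derivation and then to prove that it is symmetric. By Theorem \ref{t continuous triple der at zero are g der}, a bounded triple derivation at zero is a generalized derivation, so there is $\xi\in A^{**}$ with $T(ab)=T(a)b+aT(b)-a\xi b$ for all $a,b\in A$. Evaluating this identity at $a=b=1$ and using $T(1)=0$ forces $\xi=0$, so $T$ is a derivation. Since every $^*$-derivation on a C$^*$-algebra is a triple derivation (as recalled in the Introduction), it then suffices to show that $T$ is symmetric, that is, that the conjugate-linear map $E(y):=T(y^*)-T(y)^*$ (which satisfies $E(1)=0$) vanishes identically.

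The next step is to exploit that $T$ is simultaneously a derivation and a triple derivation at zero. Expanding
\[ \{T(x),y,z\}+\{x,T(y),z\}+\{x,y,T(z)\}-T\{x,y,z\} \]
using only the Leibniz rule for $T$ and the definition \eqref{eq C*-triple product}, the contributions carrying $T$ on the outer variables cancel against $T\{x,y,z\}$, leaving the clean identity
\[ \{T(x),y,z\}+\{x,T(y),z\}+\{x,y,T(z)\}-T\{x,y,z\}=-\tfrac12\,\big(x\,E(y)\,z+z\,E(y)\,x\big). \]
Because $T$ is a triple derivation at zero and $T(0)=0$, the left-hand side vanishes whenever $\{x,y,z\}=0$. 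This yields the key constraint
\[ x\,E(y)\,z+z\,E(y)\,x=0 \qquad\hbox{whenever}\qquad \{x,y,z\}=0, \]
and, taking $z=x$, that $x\,E(y)\,x=0$ whenever $x\,y^*\,x=0$.

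Finally I would convert this biorthogonality condition into pointwise information. Fixing $y$ and letting $x,z$ vary, the implication $x y^* z+z y^* x=0 \Rightarrow x E(y) z+z E(y) x=0$ should force $E(y)$ to be a scalar multiple of $y^*$; the natural route is to pass to the bidual $A^{**}$, test the condition against minimal partial isometries (rank-one tripotents) and use the boundedness of $T$, in the spirit of the orthogonality-preserver analysis of \cite{BurFerGarMarPe08} used later in the paper. Once $E(y)=\lambda(y)\,y^*$ is known, the conjugate-linearity of $E$ together with $E(1)=0$ propagates $\lambda\equiv0$ (by comparing $E$ on linearly independent self-adjoint elements), so $E\equiv0$ and $T$ is symmetric; being a symmetric derivation, $T$ is then a $^*$-derivation and hence a triple derivation. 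I expect this last reduction to be the main obstacle: the purely self-adjoint instances of the constraint only reproduce the generalized-derivation conditions $(d)$--$(f)$ already available on page \pageref{eq reformulations of g der} and do not, by themselves, detect the skew part of $T$, so the rank-one argument in $A^{**}$ is what genuinely forces symmetry.
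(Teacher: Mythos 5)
Your first two steps are correct and verifiable: Theorem \ref{t continuous triple der at zero are g der} together with $T(1)=0$ does force $\xi=0$, hence $T$ is a derivation (this matches the paper), and your Leibniz-rule computation of the defect
\begin{equation*}
\{T(x),y,z\}+\{x,T(y),z\}+\{x,y,T(z)\}-T\{x,y,z\}=-\tfrac12\bigl(x\,E(y)\,z+z\,E(y)\,x\bigr),\qquad E(y)=T(y^*)-T(y)^*,
\end{equation*}
is a correct and rather elegant packaging of the hypothesis. The proof, however, stops exactly at the crux. The whole difficulty of the corollary is to show $E\equiv 0$, and for that you offer only a plan: the assertion that the implication $\{x,y,z\}=0\Rightarrow x E(y) z+z E(y)x=0$ ``should force'' $E(y)\in\CC y^*$ is never proved, and it is not a soft fact --- it is a zero-product-determined-type statement whose proof would itself be a theorem. (Granting it, your propagation to $\lambda\equiv 0$ via conjugate-linearity and $E(1)=0$ does work; that part is not the issue.) Moreover, the route you sketch, testing against minimal tripotents of $A^{**}$, cannot be applied as stated: your constraint is only available for $x,z\in A$, so evaluating it on elements of the bidual requires exactly the kind of Kaplansky-density/weak$^*$-continuity apparatus that you never set up, and which is the technical heart of the paper's own proof. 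So there is a genuine gap, and it sits on the only hard step.

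Your closing diagnosis is also mistaken, and it matters because it steers you away from the workable argument. You claim that the self-adjoint instances of the constraint ``only reproduce the generalized-derivation conditions $(d)$--$(f)$'' and cannot detect the skew part of $T$. In fact the paper proves symmetry using nothing but self-adjoint instances: for $b\in A_{sa}$ it applies the hypothesis to orthogonal pairs $z_\lambda\perp b_n$ (with $b_n$ spectral truncations of $b$ and $(z_\lambda)$ a bounded self-adjoint net Kaplansky-approximating $1-p_n$), passes to weak$^*$ and norm limits to get $T(r(b))\circ b=(1-r(b))\circ T(b)^*$, and then combines this with the fact that $T^{**}$ is a derivation --- whence $T(r)=T(r)r+rT(r)$, $rT(r)r=0$ and $(1-r)T(r)(1-r)=0$ for $r=r(b)$ --- to conclude $T(r(b))^*=T(r(b))$; symmetry on all of $A_{sa}$ then follows by approximating $b$ in norm by linear combinations of differences of such range projections. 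Note that the conclusion of a self-adjoint instance involves $E(b)=T(b)-T(b)^*$, i.e.\ precisely the skew part: it is the interaction with the derivation identity, not any rank-one analysis, that kills it. If you want to finish along your own lines, the realistic fix is to drop the pointwise claim $E(y)=\lambda(y)y^*$ and instead feed your (correct) constraint on $E$ into the paper's range-projection scheme.
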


\begin{proof} Since $T(1)=0$, the previous Theorem \ref{t continuous triple der at zero are g der} assures that $T$ is a derivation. We shall next show that $T$ is a symmetric mapping. It is well known that the bitransposed $T^{**}: A^{**}\to B^{**}$ is a derivation too (see for example \cite[Lemma 4.1.4]{S}). To avoid confusion with the natural involution on $A$, we shall denote $T^{**}$ by $T$. \smallskip

Fix $b\in A_{sa}$ with range projection $r(b) \in A^{**}$. Applying the same arguments given in the proof of  $(e)\Rightarrow (f)$ in page \pageref{eq reformulations of g der}, we can find sequences $(p_n)\subseteq A^{**}$ and $(b_n)\in A_b \cong C_0(\sigma(b))$ such that $\|b_n-b\|\to 0$, $p_n$ is a closed projection in $A^{**}$ for every $n$, $b_n p_n = b_n$, and for each natural $n$, there exists a bounded net $(z_\lambda)$ in $((1-p_n) A^{**} (1-p_n))\cap A_{sa}$ converging to $1-p_n$ in the strong$^*$-topology (and hence in the weak$^*$-topology) of $A^{**}$. By hypothesis $$0 = \{T(z_\lambda), b_n, 1\} + \{ z_{\lambda}, T(b_n),1\}, \hbox{ for all } \lambda.$$ Taking weak$^*$-limits in the above equality we get from the weak$^*$-continuity of $T\equiv T^{**}$ that $$0 = \{T (1-p_n), b_n, 1\} + \{ 1-p_n, T(b_n),1\}, \hbox{ for all } n,$$ which implies, via norm continuity, that $$0 = \{T (1-r(b)), b, 1\} + \{ 1-r(b), T(b),1\}= T (1-r(b)) \circ b +  (1-r(b)) \circ T(b)^*,$$ or equivalently $$ T (r(b)) \circ b = (1-r(b)) \circ T(b)^*.
$$ Since the range projection of every power $b^m$ with $m\in \mathbb{N}$ coincides with the $r(b)$ we can apply the above argument to deduce that $$T (r(b)) \circ b^m = (1-r(b)) \circ T(b^m)^*, \hbox{ for all natural } m,$$ and by linearity and norm continuity of the product we have \begin{equation}\label{eq 2 Jun2} T (r(b)) \circ z = (1-r(b)) \circ T(z)^*, \hbox{ for all } z = z^*\in A_b. \end{equation} A standard argument involving weak$^*$-continuity of $T^{**}\equiv T$ gives \begin{equation}\label{eq 1 June2} T (r(b)) \circ r(b) = (1-r(b)) \circ T(r(b))^*.
\end{equation}

Combining that $T^{**} \equiv T$ is a derivation with \eqref{eq 1 June2} we get  $$T(r(b)) = T (r(b))  r(b)  + r(b) T (r(b)) = (1-r(b))  T(r(b))^* +  T(r(b))^*  (1-r(b)). $$ By \cite[Proposition 3.7]{EssaPeRa16b}, we know that $r(b) T(r(b)) r(b)=0 = r(b) T(r(b))^* r(b)$, and by the equivalence $(f)\Leftrightarrow (a)$ in page \pageref{eq reformulations of g der} we have  $$(1-r(b)) T(r(b))^*  (1-r(b)) =0= (1-r(b)) T(r(b))  (1-r(b)),$$ and thus  $$r(b) T(r(b)) = r(b) T(r(b))^*  (1-r(b))$$ and $$(1-r(b)) T(r(b)) = (1-r(b))  T(r(b))^* .$$ Adding the last two identities we derive at $$T(r(b))^*= T(r(b)).$$ We have proved that $T(r)^* =T(r)$ for every range projection $r$ of a hermitian element in $A$\smallskip

We return to $A_b \cong C_0(\sigma(b))$. We observe that every projection of the form $p=\chi_{_{([-\|b\|, -\delta)\cup (\delta,\|b\|])\cap \sigma(b)}}\in C_0(\sigma(b))^{**}$, with $0<\delta<\|b\|$, is the range projection of an function in $C_0(\sigma(b))$. Furthermore, every projection of the form $q=\chi_{_{([-\theta, -\delta)\cup (\delta,\theta])\cap \sigma(b)}}\in C_0(\sigma(b))^{**}$ with $0<\delta<\theta<\|b\|$ can be written as the difference of two projections of the previous type. We have shown in the previous paragraph that $T(p)^*=T(p)$ for every projection $p$ of the first type, and consequently for every projection of the second type. Since $b$ can be approximated in norm by finite linear combinations of mutually orthogonal projections $q_j$ of the second type, and $T$ is continuous, we conclude that $T(b)^* = T(b)$, which finishes the proof.
\end{proof}

The conclusion after Corollary \ref{c bl triple derivable at zero with T1=0} is now clear.

\begin{corollary}\label{c T1 skew} Let $T: A\to A$ be a bounded linear map on a unital C$^*$-algebra $A$. Suppose $T$ is triple derivable at zero and $T(1)^*=-T(1)$. Then $T$ is a triple derivation.
\end{corollary}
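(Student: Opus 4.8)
The plan is to mirror exactly the reduction used in Corollary \ref{c derivable at 1 are triple derivations}, but to feed the reduced map into the \emph{at-zero} machinery of Corollary \ref{c bl triple derivable at zero with T1=0} rather than into Proposition \ref{symmetric}. Concretely, I would subtract off a suitable inner triple derivation so as to kill the value of $T$ at the unit, reduce to a map with $\tilde T(1)=0$, invoke the known conclusion for that case, and then add the inner piece back.

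First I would recall that for any skew-symmetric $a$ the map $\delta(a,1)(x):=\{a,1,x\}-\{1,a,x\}$ is a genuine triple derivation (as in \cite[Proof of Lemma 1]{HoMarPeRu}), hence in particular a triple derivation at zero. Applying this with $a=T(1)$, which is skew-symmetric by hypothesis, I would compute its value at the unit: since $\{T(1),1,1\}=T(1)$ and $\{1,T(1),1\}=T(1)^*$, one gets
$$\delta(T(1),1)(1)=T(1)-T(1)^*=2T(1),$$
using $T(1)^*=-T(1)$ in the last step. Setting $\tilde T:=T-\tfrac12\,\delta(T(1),1)$, this shows $\tilde T(1)=T(1)-\tfrac12(2T(1))=0$. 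The map $\tilde T$ is bounded (it is $T$ minus a fixed combination of multiplication operators) and, because the linear maps which are triple derivations at zero form a subspace of $L(A)$ (as noted after Definition \ref{def triple derivation at a point}) and both $T$ and $\delta(T(1),1)$ lie in it, $\tilde T$ is again a bounded triple derivation at zero.

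At this point Corollary \ref{c bl triple derivable at zero with T1=0} applies directly to $\tilde T$ and yields that $\tilde T$ is a $^*$-derivation, and in particular a triple derivation. Writing $T=\tilde T+\tfrac12\,\delta(T(1),1)$ then exhibits $T$ as a sum of two triple derivations, which completes the proof. I do not expect any genuine obstacle here: the entire argument is a book-keeping reduction, and the only point that requires care is the identity $\delta(T(1),1)(1)=2T(1)$, which is precisely where the skew-symmetry hypothesis $T(1)^*=-T(1)$ is consumed; everything analytic has already been carried out in Corollary \ref{c bl triple derivable at zero with T1=0}.
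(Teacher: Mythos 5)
Your proposal is correct and follows essentially the same route as the paper's own proof: subtract $\tfrac12\,\delta(T(1),1)$ (a genuine triple derivation, hence a triple derivation at zero), use the subspace property to see $\tilde T$ is a bounded triple derivation at zero with $\tilde T(1)=0$, apply Corollary \ref{c bl triple derivable at zero with T1=0}, and add the inner piece back. Your explicit verification that $\delta(T(1),1)(1)=T(1)-T(1)^*=2T(1)$ is a welcome detail the paper leaves implicit.
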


\begin{proof} As in the proof of Corollary \ref{c derivable at 1 are triple derivations}. Since $T(1)^* =-T(1),$ the mapping $\delta(T(1),1):A \to A,$ $\delta(T(1),1) (x) = \{T(1),1,x\}- \{1,T(1),x\}$ is a triple derivation. Since the the linear combination of linear maps which are triple derivations at zero is a triple derivation at zero, the mapping $\tilde{T}=T-\frac{1}{2}\delta(T(1),1)$ is a triple derivation at zero, and $\tilde{T} (1) = 0$. Applying Corollary \ref{c bl triple derivable at zero with T1=0}, we derive that $\tilde{T}$ is a $^*$-derivation, and hence a triple derivation. Therefore $T= \tilde{T} +\frac{1}{2}\delta(T(1),1)$ is a triple derivation.
\end{proof}

The above results, are somehow optimal, in the sense that there exists bounded linear maps which are triple derivations at zero but they are not triple derivations. For example, let $Z$ be the center of a unital C$^*$-algebra $B$, and let us pick $b_0\in B$ with $0\neq b_0 \neq - b_0^*$. We define a bounded linear mapping $T: Z\to B$ by $T(x) = b_0 x$. Clearly, $T(1) = b_0\neq - b_0^*$ implies that $T$ is not a triple derivation (cf. Lemma \ref{lemma33}\cite[proof of Lemma 1]{HoMarPeRu}, \cite[Lemma 3.4]{HoPeRu} or \cite[Lemma 2.1]{KuOikPeRu14}). Suppose $\{ a,b,c \} =0$ in $Z$. Since $Z$ is the center of $B$, $b_0$ commutes with every element in $B$, and then we have $$\{T(a),b,c\} +\{a,T(b),c\} + \{a,b,T(c)\}= \{b_0 a,b,c\} +\{a,b_0 b,c\} + \{a,b,b_0 c\}$$ $$  =2 b_0 \{a,b,c\} + b_0^* \{a,b,c\} =0= T(0) , $$ witnessing that $T$ is a triple derivation at zero.\smallskip

With the help of \cite[Theorem 2.8 and Proposition 2.4]{EssaPeRa16} we can now throw some new light about the automatic continuity of generalized derivations and linear maps which are derivable at zero on a von Neumann algebra.

\begin{theorem}\label{t automatic cont gen der and triple der at zero on von Neumann} Let $T: M\to M$ be a linear mapping on a von Neumann algebra. Suppose that for each $a,b,c$ in a commutative von Neumann subalgebra $\mathcal{B}$ with $a b = bc=0$ we have $a T(b) c= 0$. Then $T$ is continuous.
\end{theorem}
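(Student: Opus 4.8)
The plan is to deduce continuity from the triviality of the \emph{separating space}
$$\mathfrak{S}(T):=\{\, y\in M : \exists\,(x_n)\subseteq M,\ x_n\to 0,\ T(x_n)\to y\,\}.$$
Since $M$ is a Banach space and $T$ is everywhere defined, the closed graph theorem tells us that $T$ is continuous if and only if $\mathfrak{S}(T)=\{0\}$. I would use the standard facts that $\mathfrak{S}(T)$ is a norm-closed subspace of $M$, that $S(\mathfrak{S}(T))\subseteq \mathfrak{S}(S\circ T)$ for every bounded linear $S:M\to M$, and that $\mathfrak{S}(T\circ R)\subseteq \mathfrak{S}(T)$ for every bounded $R$. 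In particular, taking the bounded compression $S_p(y)=(1-p)\,y\,(1-p)$ associated with a projection $p$, the inclusion $S_p(\mathfrak{S}(T))\subseteq \mathfrak{S}(S_p\circ T)$ shows that, to kill $\mathfrak{S}(T)$, it suffices to prove that the compressed maps are continuous for a rich enough family of projections.

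First I would fix a self-adjoint $b\in M_{sa}$ and pass to the abelian von Neumann subalgebra $\mathcal{B}=W^*(b,1)\cong L^\infty(\Omega,\mu)$ it generates, so that all spectral projections of $b$ lie in $\mathcal{B}$. Read inside $\mathcal{B}$, the hypothesis says exactly that $a\,T(b')\,c=0$ whenever $a,b',c\in\mathcal{B}$ satisfy the support relations $ab'=b'c=0$, which is the commutative incarnation of conditions $(b)$--$(c)$ on page \pageref{eq reformulations of g der}. The goal at this stage is to run, at the level of $\mathfrak{S}(T)$, the analogue of the implication $(e)\Rightarrow(f)$: for the spectral projection $p=\chi_{[-\delta,\delta]}(b)$ one aims to show that the compressions $(1-p)\,\mathfrak{S}(T)\,(1-p)$ vanish. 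The decisive point that makes the \emph{commutative}-subalgebra hypothesis sufficient — rather than the global zero-product condition of \cite[Theorem 2.8 and Proposition 2.4]{EssaPeRa16} — is that every orthogonal triple $a,b',c$ and every approximating element needed throughout the argument can be chosen inside $\mathcal{B}$ together with its spectral projections; no orthogonality outside an abelian subalgebra is ever required. Thus I would adapt the proof of \cite[Theorem 2.8]{EssaPeRa16}, checking line by line that it only invokes such commutative triples.

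The hard part will be that, unlike in the proof of $(e)\Rightarrow(f)$ or of Corollary \ref{c bl triple derivable at zero with T1=0}, $T$ is \emph{not} assumed continuous here, so the pointwise identities $a\,T(b')\,c=0$ cannot be upgraded to statements about range projections by taking strong$^*$- or norm-limits of $T(b'_n)$. To circumvent this I would invoke a Ringrose--Sinclair stability argument: choosing a sequence of mutually orthogonal spectral projections $(p_n)$ of $b$ together with the associated bounded multipliers, the nested separating spaces stabilise, and feeding in the identities supplied by the hypothesis forces $(1-p_n)\,\mathfrak{S}(T)\,(1-p_n)=\{0\}$ for all large $n$. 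Passing to the supremum of the $p_n$ (letting $\delta\to 0$) and then letting $b$ range over $M_{sa}$ yields $\mathfrak{S}(T)=\{0\}$, and hence the continuity of $T$. I expect the verification that the stability/orthogonal-sequence argument of \cite{EssaPeRa16} can be carried out entirely inside a single abelian von Neumann subalgebra — so that the weaker hypothesis of the present theorem is genuinely enough — to be the only delicate step.
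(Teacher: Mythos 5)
Your framework (separating space plus closed graph theorem) is correct as formalism, and your instinct to work inside abelian von Neumann subalgebras and to adapt \cite[Theorem 2.8]{EssaPeRa16} points in the right direction; but the engine of the proof is missing, and the mechanism you propose cannot supply it. First, a Ringrose--Sinclair stability argument requires continuous intertwining relations of the form $TR_n-S_nT$ bounded (for derivations this is $DL_a-L_aD=L_{D(a)}$); the hypothesis $aT(b)c=0$ for annihilating triples in a commutative subalgebra provides no such relation, and before any algebraic identity for $T$ is established, $\mathfrak{S}(T)$ is merely a closed subspace with no module structure, so the stability lemma has nothing to act on. Second, the hypothesis only constrains $T$ at elements \emph{of} the abelian subalgebra $\mathcal{B}$, whereas $\mathfrak{S}(T)$ is built from $T(x_n)$ for arbitrary $x_n\to 0$ in $M$, elements bearing no orthogonality relation to the spectral projections of a fixed $b$; the analogue of $(e)\Rightarrow(f)$ on page \pageref{eq reformulations of g der} controls the single element $T(b)$ compressed by the \emph{null} projection $1-r(b)$, not the separating space compressed by $1-\chi_{[-\delta,\delta]}(b)$, so there is no way to ``feed in'' the hypothesis as you describe. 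Third, the endgame is circular: for invertible $b\in M_{sa}$ and small $\delta$ your claim $(1-p)\mathfrak{S}(T)(1-p)=\{0\}$ \emph{is} the full theorem; and even granting continuity of $T$ on every abelian von Neumann subalgebra, ``letting $b$ range over $M_{sa}$'' does not yield $\mathfrak{S}(T)=\{0\}$ --- that passage from abelian subalgebras to all of $M$ is itself a nontrivial theorem of Ringrose \cite[Theorem 2.5]{Ringrose74}, not a soft limit argument.

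The missing idea, which is how the paper proceeds, is to convert the annihilation hypothesis into an \emph{algebraic identity} before any continuity considerations. Fix an abelian von Neumann subalgebra $\mathcal{B}\ni 1$ and $a,b\in\mathcal{B}$ with $ab=0$. The map $L_{a,b}(x)=aT(bx)$ on $\mathcal{B}$ is left-annihilator preserving: if $cd=0$ in $\mathcal{B}$, commutativity gives $a(bc)=(bc)d=0$, so $L_{a,b}(c)d=aT(bc)d=0$ by hypothesis. Then \cite[Proposition 2.4]{EssaPeRa16} forces $L_{a,b}$ to be a continuous left multiplier, i.e.\ $aT(bx)=aT(b)x$. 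This in turn shows that each $R_x(z)=T(xz)-T(z)x$ is right-annihilator preserving on $\mathcal{B}$, hence a continuous right multiplier, and one concludes that $T|_{\mathcal{B}}:\mathcal{B}\to M$ is a derivation. Only at this point does automatic continuity enter: Ringrose's theorem on derivations \cite[Theorem 2]{Ringrose72} gives boundedness of $T|_{\mathcal{B}}$ for every abelian von Neumann subalgebra $\mathcal{B}$ containing the unit, and \cite[Theorem 2.5]{Ringrose74} then yields continuity of $T$ on all of $M$. In short, the relevant tool from \cite{EssaPeRa16} is the multiplier characterization of annihilator-preserving maps, not a separating-space compression argument, and the continuity ultimately flows from derivation theory rather than from the closed graph theorem applied to compressed maps.
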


\begin{proof}
We can mimic the ideas in the proof of \cite[Theorem 2.8]{EssaPeRa16}. Assume that $T$ satisfies hypothesis $(a)$ (respectively $(b)$). Let $\mathcal{B}$ be a commutative von Neumann subalgebra of $M$ containing the unit of $M$. Fix $a,b\in \mathcal{B}$ with $a b=0$, and define a linear mapping $L_{a,b} : \mathcal{B}\to M$, by $L_{a,b} (x) = aT(bx)$. Let $c,d$ be elements in $\mathcal{B}$ with $cd=0$, then, by hypothesis $L_{a,b} (c) d = aT(bc)d =0$ (because $a(bc) d=0$). This proves that $L_{a,b}$ is a linear left-annihilator preserving. Proposition 2.4 in \cite{EssaPeRa16} assures that $L_{a,b}$ is continuous and a left-multiplier, that is, $L_{a,b} (x) = L_{a,b} (1) x,$ for every $x\in \mathcal{B}$. This property assures that, for each $x\in \mathcal{B}$ the mapping $R_x : \mathcal{B}\to M$, $R_{x} (z) = T(xz)-T(z)x$ satisfies $a R_{x} (b) = 0,$ for every $ab=0$ in $\mathcal{B}$. Consequently, $R_{x}$ is a linear right-annihilator preserving, and Proposition 2.4 in \cite{EssaPeRa16} proves that $R_{x}$ is a continuous right multiplier. We have shown that $$T(y x) -T(y) x = R_x (y) = y R_x (1) = y T(x),$$ for every $x,y\in \mathcal{B}$, or equivalently, $T|_{\mathcal{B}}: \mathcal{B}\to M$ is a derivation. Theorem 2 in \cite{Ringrose72} assures that $T|_{\mathcal{B}}$ is a bounded linear map, and this holds for every abelian von Neumann subalgebra  $\mathcal{B}$ of $M$ containing the unit of $M$. The continuity of $T$ follows as a consequence of \cite[Theorem 2.5]{Ringrose74}.
\end{proof}

Every generalized derivation on a von Neumann algebras satisfies the hypothesis of the above Theorem \ref{t automatic cont gen der and triple der at zero on von Neumann}. Surprisingly, the linear maps on a von Neumann algebra which are triple derivable at zero also satisfy the same hypothesis.

\begin{corollary}\label{c automatic cont gen der and triple der at zero on von Neumann} Every generalized derivation on a von Neumann algebra is continuous. Every linear map on a von Neumann algebra which is a derivation {\rm(}respectively, a triple derivation{\rm)} at zero is continuous.
\end{corollary}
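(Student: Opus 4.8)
The plan is to show that each of the three classes of maps satisfies the hypothesis of Theorem~\ref{t automatic cont gen der and triple der at zero on von Neumann}: that $a\,T(b)\,c=0$ whenever $a,b,c$ lie in a commutative von Neumann subalgebra $\mathcal{B}$ of $M$ (containing the unit) with $ab=bc=0$; continuity is then automatic. For the first two classes this is immediate and needs no continuity. If $G$ is a generalized derivation, so that $G(ab)=G(a)b+aG(b)-a\xi b$ for some $\xi\in M^{**}$, then $ab=0$ gives $aG(b)=a\xi b-G(a)b$, whence $aG(b)c=a\xi bc-G(a)bc=0$ as soon as $bc=0$. If $T$ is a derivation at zero, Lemma~\ref{lemma1.2 associative der} already yields $a\,T(b)\,c=0$ for all $a,b,c$ with $ab=bc=0$. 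In either case Theorem~\ref{t automatic cont gen der and triple der at zero on von Neumann} applies directly.

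The substantial case is that of a triple derivation $T$ at zero. Fix a commutative von Neumann subalgebra $\mathcal{B}\ni 1$ and a self-adjoint $b\in\mathcal{B}$, and set $e:=1-r(b)\in\mathcal{B}$, the complement of the range projection of $b$. For self-adjoint $a,c\in e\mathcal{B}$ we have $a\perp b\perp c$, so Lemma~\ref{lemma1.2} gives $\{a,T(b),c\}=0$, i.e.
$$a\,T(b)^*\,c+c\,T(b)^*\,a=0.$$
The obstacle is that Theorem~\ref{t automatic cont gen der and triple der at zero on von Neumann} requires the asymmetric identity $a\,T(b)\,c=0$, whereas Lemma~\ref{lemma1.2} only delivers its symmetrization.

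To overcome this I would write $x:=T(b)^*$ and separate the two summands by a corner argument. Taking $c=a$ gives $axa=0$ for every self-adjoint $a\in e\mathcal{B}$, in particular $pxp=0$ for every projection $p\le e$. For orthogonal projections $p,q\le e$ the identity reads $pxq+qxp=0$; since $pxq\in pMq$, $qxp\in qMp$, and $pMq\cap qMp=\{0\}$ when $p\perp q$, we obtain $pxq=0=qxp$. For arbitrary commuting projections $p,q\le e$, the decomposition into the mutually orthogonal pieces $p-pq$, $q-pq$, $pq$ reduces everything to the two cases just treated and yields $pxq=0$. Passing to finite real-linear combinations of spectral projections and using norm continuity of the product then gives $a\,x\,c=0$, i.e. $a\,T(b)^*\,c=0$, for all self-adjoint $a,c\in e\mathcal{B}$; taking adjoints, $a\,T(b)\,c=0$.

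It remains to drop the self-adjointness and the restriction to a single $b$. Given arbitrary $a,b,c\in\mathcal{B}$ with $ab=bc=0$, decompose each into real and imaginary parts. Since $\mathcal{B}$ is abelian, $ab=0$ forces $r(a)\perp r(b)$, hence every self-adjoint part of $a$ is orthogonal to every self-adjoint part of $b$, and likewise for $b,c$ from $bc=0$. Expanding $a\,T(b)\,c$ by linearity of $T$ and bilinearity in the outer factors expresses it as a combination of terms $a_j\,T(b_k)\,c_l$ with $a_j\perp b_k\perp c_l$, each of which vanishes by the previous step. Hence $a\,T(b)\,c=0$, and Theorem~\ref{t automatic cont gen der and triple der at zero on von Neumann} yields continuity. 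The one genuinely new ingredient is the corner-orthogonality relation $pMq\cap qMp=\{0\}$, which converts the symmetric vanishing supplied by Lemma~\ref{lemma1.2} into the asymmetric vanishing demanded by the theorem; everything else is routine.
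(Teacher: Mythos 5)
Your proposal is correct, and its skeleton is the paper's: you verify the hypothesis of Theorem~\ref{t automatic cont gen der and triple der at zero on von Neumann} for each of the three classes, treat generalized derivations and derivations at zero exactly as the paper does (the latter via Lemma~\ref{lemma1.2 associative der}), and for triple derivations at zero you combine Lemma~\ref{lemma1.2} with the range projection $e=1-r(b)$. The only real divergence is how the symmetric identity $a\,T(b)\,c+c\,T(b)\,a=0$ gets converted into the asymmetric one, and here the paper is much shorter: since $b(1-r(b))=(1-r(b))b=0$, it simply evaluates the symmetric identity at $a=c=1-r(b)$, obtaining $(1-r(b))T(b)(1-r(b))=0$, and then sandwiches, $a\,T(b)\,c=a(1-r(b))\,T(b)\,(1-r(b))\,c=0$. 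Your corner machinery ($pxp=0$; $pxq\in pMq\cap qMp=\{0\}$ for orthogonal $p,q$; spectral approximation) proves the same thing but is a detour you do not need: your own diagonal specialization $c=a$ (which gives $axa=0$) applied to the single projection $a=e$ already yields $exe=0$, and then $a\,x\,c=(ae)\,x\,(ec)=a(exe)c=0$ for all $a,c\in e\mathcal{B}$, with no self-adjointness needed. For the same reason the reduction of $b$ to its real and imaginary parts can be skipped: every element of the abelian algebra $\mathcal{B}$ is normal, its left and right range projections coincide and lie in $\mathcal{B}$, so the paper runs the argument for arbitrary $b\in\mathcal{B}$ at once, applying Lemma~\ref{lemma1.2} to $a^{*}, b, c^{*}$, whose mutual orthogonality follows from commutativity. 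In short: same route and correct, but the two devices you single out as the genuinely new ingredients, the corner-orthogonality relation and the hermitian decomposition, are both rendered unnecessary by the one-line specialization $a=c=1-r(b)$.
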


\begin{proof} The first statement is clear. The statement concerning (associative) derivations at zero is a consequence of Lemma \ref{lemma1.2 associative der}. In order to prove the remaining statement, we assume that $T:M\to M$ is a linear map on a von Neumann algebra which is triple derivable at zero. Let $\mathcal{B}$ be a commutative von Neumann subalgebra of $M$ containing the unit, and let us take $a,b,c\in \mathcal{B}$ with $a b= bc=0$. By the commutativity of $\mathcal{B}$ we have $ b\perp a, a^*$ and $b\perp c,c^*$. By Lemma \ref{lemma1.2} we have $0=\{ a^*, T(b), c^*\} = \frac12(a^* T(b)^* c^* + c^* T(b)^* a^* ),$ and then $a T(b) c + c T(b) a=0$.\smallskip

Since $\mathcal{B}$ is a commutative von Neumann algebra, the every element in $\mathcal{B}$ is normal and its left and right range projections coincide and belong to $\mathcal{B}$. Let $r(b)\in \mathcal{B}$ be the range projection of $b$. Since $0=b (1-r(b)) = (1-r(b))b$, we deduce from what is proved in the first paragraph that $(1-r(b))  T(b) (1-r(b)) =0$. Finally, since $a = a(1-r(b)) $ and $c= c(1-r(b))$, we have $a T(b) c = a (1-r(b)) T(b) (1-r(b)) c=0$. This shows that $T$ satisfies the hypothesis in Theorem \ref{t automatic cont gen der and triple der at zero on von Neumann}, and thus $T$ is continuous.
\end{proof}

Combining Corollary \ref{c automatic cont gen der and triple der at zero on von Neumann} with Theorem \ref{t continuous triple der at zero are g der} and Corollaries \ref{c bl triple derivable at zero with T1=0} and \ref{c T1 skew} we establish the following.

\begin{corollary}\label{c final for vN algebras} Let $M$ be a von Neumann algebra. Suppose $T: M\to M$ is a linear map which is a triple derivation at zero. Then the following statements hold:
\begin{enumerate}[$(a)$]\item $T$ is a continuous generalized derivation;
\item If $T(1)=0$ then $T$ is a {\rm(}continuous{\rm)} $^*$-derivation and a triple derivation;
\item If $T(1)^*=-T(1)$ then $T$ is a {\rm(}continuous{\rm)} triple derivation. $\hfill\Box$
\end{enumerate}
\end{corollary}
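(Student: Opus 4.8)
The plan is to bootstrap from the automatic continuity available on von Neumann algebras and then feed the now-bounded map into the three structural results already established for bounded maps. The first and decisive step is to observe that $M$, being a von Neumann algebra, is unital, and that $T$ is a triple derivation at zero by hypothesis; hence Corollary \ref{c automatic cont gen der and triple der at zero on von Neumann} applies directly and yields that $T$ is continuous. This single observation is what unlocks everything else, since the subsequent theorems were all stated under a boundedness assumption that we are no longer required to impose a priori.

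Once continuity is in hand, statement $(a)$ follows at once: $T$ is a bounded linear map which is a triple derivation at zero, so Theorem \ref{t continuous triple der at zero are g der} guarantees that $T$ is a generalized derivation. For $(b)$, under the additional assumption $T(1)=0$, I would invoke Corollary \ref{c bl triple derivable at zero with T1=0}, whose hypotheses ($A$ unital, $T$ bounded, $T$ a triple derivation at zero, and $T(1)=0$) are now all met; this gives that $T$ is a $^*$-derivation and therefore a triple derivation. For $(c)$, under $T(1)^*=-T(1)$, the desired conclusion is precisely Corollary \ref{c T1 skew}, again applicable because $T$ has just been shown to be bounded.

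The only genuine work lies in the continuity step, and that has already been carried out in Corollary \ref{c automatic cont gen der and triple der at zero on von Neumann}; the remainder is simply a matter of verifying that the hypotheses of each quoted result are satisfied. I therefore do not anticipate any real obstacle: the corollary is essentially an assembly of the preceding machinery, with automatic continuity serving as the keystone that allows the bounded-case theorems to discharge the case in which boundedness was not assumed.
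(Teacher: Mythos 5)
Your proposal is correct and follows exactly the paper's own route: the paper likewise obtains continuity from Corollary \ref{c automatic cont gen der and triple der at zero on von Neumann} and then assembles the three statements from Theorem \ref{t continuous triple der at zero are g der}, Corollary \ref{c bl triple derivable at zero with T1=0}, and Corollary \ref{c T1 skew}, respectively. Nothing is missing; the automatic-continuity step is indeed the keystone, just as you identified.
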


We can also apply the above results to relax some of the hypothesis in previous papers. We begin with a version of the results reviewed in page \pageref{eq reformulations of g der}  for non-necessarily continuous linear maps.

\begin{corollary}\label{c non continuous generalized derivation} Let $T: M\to M$ be a linear map on a von Neumann algebra. The the following statements are equivalent:
\begin{enumerate}[$(a)$]
\item $T$ is a generalized derivation;
\item $a T(b) c = 0$, whenever $ab=bc=0$ in $M$;
\item $a T(b) c = 0$, whenever $ab=bc=0$ in $M_{sa}$;
\item[$(d)$] $a T(b) c + c T(b) a = 0$, whenever $ab=bc=0$ in $M_{sa}$;
\item[$(e)$] $a T(b) a = 0$, whenever $ab=0$ in $M_{sa}$.
\item[$(f)$] For each $b$ in $M_{sa}$ we have $(1-r(b)) T(b) (1-r(b))=0$, where $r(b)$ denotes the range projection of $b$ in $M$.
\end{enumerate}
\end{corollary}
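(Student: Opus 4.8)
The strategy is to reduce the whole statement to the automatic continuity result in Theorem~\ref{t automatic cont gen der and triple der at zero on von Neumann} together with the equivalences $(a)$--$(f)$ established, \emph{for continuous maps}, on page~\pageref{eq reformulations of g der}. Concretely, I would first check that each of the conditions $(a)$--$(f)$, \emph{without any continuity hypothesis}, forces $T$ to satisfy the hypothesis of Theorem~\ref{t automatic cont gen der and triple der at zero on von Neumann} and is therefore continuous. Once $T$ is known to be continuous, $M$ is a unital (hence essential) Banach $M$-bimodule over itself, and the reformulations recalled on page~\pageref{eq reformulations of g der} apply verbatim with $X=M$, giving the equivalence of all six statements at one stroke.

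For the algebraic part I would first record the implications valid for an arbitrary linear map. The implication $(a)\Rightarrow(b)$ is purely formal: if $T$ is a generalized derivation with associated $\xi\in M^{**}$ and $ab=bc=0$, then $T(ab)=0$ yields $aT(b)=a\xi b-T(a)b$, and multiplying on the right by $c$ and using $bc=0$ gives $aT(b)c=0$. The chain $(b)\Rightarrow(c)\Rightarrow(d)\Rightarrow(e)$ is immediate (restrict to self-adjoint elements, symmetrise, and set $c=a$), while $(e)\Rightarrow(d)$ follows by a polarisation argument: for $a,b,c\in M_{sa}$ with $ab=bc=0$ one has $(a+c)b=0$, so $(e)$ applied to $a+c$, together with $aT(b)a=cT(b)c=0$ (both from $(e)$, since $ab=cb=0$), gives $aT(b)c+cT(b)a=0$. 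Finally $(f)\Rightarrow(c)$ needs no continuity: if $ab=bc=0$ in $M_{sa}$ then $a=a(1-r(b))$ and $c=(1-r(b))c$, whence $aT(b)c=a(1-r(b))T(b)(1-r(b))c=0$. In particular every one of $(a)$--$(f)$ implies $(d)$.

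The heart of the matter, and the step I expect to be the main obstacle, is to prove that $(d)$ alone implies continuity. Here I would mimic the proof of Corollary~\ref{c automatic cont gen der and triple der at zero on von Neumann}. Fix a commutative von Neumann subalgebra $\mathcal{B}$ of $M$ containing the unit and take $a,b,c\in\mathcal{B}$ with $ab=bc=0$; the goal is to reach $aT(b)c=0$, which is precisely the hypothesis of Theorem~\ref{t automatic cont gen der and triple der at zero on von Neumann}. Decomposing $a,b,c$ into their self-adjoint parts, which again lie in $\mathcal{B}$, and using that $ab=0=bc$ forces the support of $b$ to be disjoint from those of $a$ and $c$, it suffices to treat self-adjoint $a,b,c$. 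The range projection $r(b)$ then lies in $\mathcal{B}$, and applying $(d)$ to the self-adjoint triple $(1-r(b),\,b,\,1-r(b))$, which satisfies $(1-r(b))b=b(1-r(b))=0$, yields $2(1-r(b))T(b)(1-r(b))=0$, that is, $(1-r(b))T(b)(1-r(b))=0$. Since $a=a(1-r(b))$ and $c=(1-r(b))c$, this gives $aT(b)c=0$, as needed, and Theorem~\ref{t automatic cont gen der and triple der at zero on von Neumann} shows that $T$ is continuous. Combining this with the algebraic implications above, every one of $(a)$--$(f)$ forces $T$ to be continuous (the case $(a)$ being also covered directly by the first assertion of Corollary~\ref{c automatic cont gen der and triple der at zero on von Neumann}), and the continuous equivalences on page~\pageref{eq reformulations of g der} then close the argument, proving that $(a)$--$(f)$ are all equivalent.
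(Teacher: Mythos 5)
Your proof is correct and takes essentially the same route as the paper's: establish the purely algebraic implications among $(a)$--$(f)$, verify the hypothesis of Theorem \ref{t automatic cont gen der and triple der at zero on von Neumann} on commutative von Neumann subalgebras to obtain automatic continuity, and then close the cycle with the continuous-case equivalences recalled on page \pageref{eq reformulations of g der}. The only (immaterial) difference is that you funnel all conditions through $(d)$, deducing $(f)$ by applying $(d)$ with $a=c=1-r(b)$, whereas the paper runs the chain $(a)\Rightarrow(b)\Rightarrow(c)\Rightarrow(d)\Rightarrow(e)\Rightarrow(f)$ and verifies the continuity hypothesis from $(e)$/$(f)$, decomposing only the middle element of the triple into its self-adjoint parts.
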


\begin{proof} The implication $(a)\Rightarrow (b)$ follows from Lemma \ref{lemma1.2 associative der}, while the implications $(b)\Rightarrow (c)\Rightarrow (d) \Rightarrow (e)$ are clear.\smallskip

If we show that any linear mapping $T:M\to M$ satisfying $(e)$ or $(f)$ is continuous then the remaining implications will follow from the arguments given in page \pageref{eq reformulations of g der}. If $T$ satisfies $(e)$ then $T$ also satisfies $(f)$ because $r(b)\in M_{sa}$ for every $b\in M_{sa}$ and $(1-r(b)) b=0$.\smallskip

Let $\mathcal{B}$ be a commutative von Neumann subalgebra of $M$, and let us take $x,y,z\in \mathcal{B}$ with $x y = y z=0$. We can write $x = x_1 + i x_2,$ $y = y_1 + i y_2$, and $z = z_1 + i z_2$ with $x_j,y_j, z_j\in M_{sa}$.\smallskip

Suppose $T$ satisfies $(e)$. Since $\mathcal{B}$ is commutative $x_j y_k = y_k x_j=0$ and $z_j y_k = y_k z_j=0$ for all $j,k=1,2$. Clearly $x_j = x_j (1-r(y_k))$ and $y_j = y_j (1-r(y_k))$,  for all $j,k=1,2$, which assures that $x = x (1-r(y_k))$ and $y = y (1-r(y_k))$ for all $k=1,2$. Therefore, by assumptions, we obtain $$ x T(y) z= x T(y_1) z + i x T(y_2) z = x (1-r(y_1)) T(y_1) (1-r(y_1)) z $$ $$+ i x (1-r(y_2)) T(y_2) (1-r(y_2)) z =0,$$ which finishes the proof.
\end{proof}

In \cite[Theorem 4]{JingLuLi2002} W. Jing, S.J. Lu, and P.T. Li prove that a continuous linear map $\delta$ on a von Neumann algebra is a generalized derivation whenever it is a derivation at zero. If additionally $\delta (1)=0$, then $\delta$ is a derivation. Corollary \ref{c automatic cont gen der and triple der at zero on von Neumann} assures that the hypothesis concerning the continuity of $\delta$ can be relaxed.

\begin{corollary}\label{c JingLuLi without continuity} Let $\delta$ be a linear map on a von Neumann algebra. Suppose $\delta$ is a derivation at zero. Then $\delta$ is a (continuous) generalized derivation. If additionally $\delta (1)=0$, then $\delta$ is a derivation.$\hfill\Box$
\end{corollary}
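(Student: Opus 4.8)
The plan is to obtain this statement directly from the automatic-continuity machinery developed above together with Theorem \ref{t continuous triple der at zero are g der}, and then to pin down the defining element of the resulting generalized derivation. The corollary is genuinely a packaging result: all the analytic difficulty has already been absorbed into the earlier lemmas, so what remains is to chain them in the right order.

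First I would invoke Corollary \ref{c automatic cont gen der and triple der at zero on von Neumann}, which guarantees that a linear map on a von Neumann algebra $M$ that is a derivation at zero is automatically continuous. This is precisely the step that removes the continuity hypothesis imposed in \cite[Theorem 4]{JingLuLi2002}; underneath, it rests on Lemma \ref{lemma1.2 associative der} (which supplies $a T(b) c = 0$ whenever $ab = bc = 0$) feeding into the automatic-continuity statement of Theorem \ref{t automatic cont gen der and triple der at zero on von Neumann}. With continuity of $\delta$ in hand, the first assertion is immediate from Theorem \ref{t continuous triple der at zero are g der}, since a bounded linear map on a C$^*$-algebra which is a derivation at zero is a generalized derivation. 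In particular there exists $\xi \in M^{**}$ with $\delta(ab) = \delta(a) b + a \delta(b) - a \xi b$ for all $a, b \in M$.

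For the final clause, suppose additionally that $\delta(1) = 0$. Evaluating the generalized Leibniz identity at $a = b = 1$ yields $\delta(1) = 2\delta(1) - \xi$, so that $\xi = \delta(1) = 0$. Substituting $\xi = 0$ back into the identity gives $\delta(ab) = \delta(a) b + a \delta(b)$ for all $a, b \in M$, and hence $\delta$ is a derivation.

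I do not expect a serious obstacle here, since every substantive ingredient is already in place. The only point demanding any care is the final substitution, which is what upgrades \emph{generalized derivation} to \emph{derivation}: one must observe that the defining element $\xi$ is forced to equal $\delta(1)$ by testing against the unit, so that the vanishing of $\delta(1)$ collapses the correction term $a\xi b$ entirely. This is a one-line computation, but it is the conceptual crux of the corollary.
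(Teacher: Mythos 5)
Your proposal is correct and follows essentially the same route the paper intends: automatic continuity from Corollary \ref{c automatic cont gen der and triple der at zero on von Neumann} (resting on Lemma \ref{lemma1.2 associative der} and Theorem \ref{t automatic cont gen der and triple der at zero on von Neumann}), followed by Theorem \ref{t continuous triple der at zero are g der} for the generalized-derivation conclusion. The only difference is cosmetic: for the final clause the paper simply invokes \cite[Theorem 4]{JingLuLi2002}, now applicable since $\delta$ is continuous, whereas you make this step self-contained by testing the generalized Leibniz identity at $a=b=1$ to get $\xi=\delta(1)$ (legitimate, since the unit of $M$ is also the unit of $M^{**}$) and concluding that $\delta(1)=0$ kills the correction term.
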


\section{Triple homomorphisms at a fixed point}\label{sec: triple hom at a point}

Let $A$ and $B$ be C$^*$-algebras. According to the notation in \cite{BurCabSanPe2016}, a linear map $T : A \to B$ is said to be a \emph{$^*$-homomorphism at $z\in A$} if $$\hbox{$a b^* =z\Rightarrow T(a) T(b)^*=T(z),$ and $c^* d =z\Rightarrow T(c)^* T(d)=T(z)$.}$$ In \cite[Theorem 2.5]{BurCabSanPe2016} it is shown that when $A$ is unital, a linear map $T : A \to B$ which is a $^*$-homomorphism at $1$ is continuous and a Jordan $^*$-homomorphism. The same conclusion hold if there exists a non-zero projection $p\in A$ such that $T$ is a $^*$-homomorphism at $p$ and at $1-p$ \cite[Corollary 2.12]{BurCabSanPe2016}. Furthermore, in the above setting, $T$ is a $^*$-homomorphism if and only if $T$ is a $^*$-homomorphism at $0$ and at $1$ \cite[Corollary 2.11]{BurCabSanPe2016}. If $A$ is assumed to be simple and infinite, then a linear map $T: A\to B$ is a $^*$-homomorphism if and only if $T$ is a $^*$-homomorphism at the unit of $A$ (cf. \cite[Theorem 2.8]{BurCabSanPe2016}). In the just quoted paper, it also studied when a continuous linear map which is a $^*$-homomorphism at a unitary element is a Jordan $^*$-homomorphism.\smallskip

We recall some terminology needed in forthcoming results. For each partial isometry $e$ in a C$^*$-algebra $A$ (i.e., $ee^*e =e$), we can decompose $A$ as a direct sum of the form $$A= (ee^* A e^*e) \oplus \left((1-ee^*) A  e^*e \oplus ee^* A (1-e^*e)\right)\oplus (1-ee^*) A (1-e^*e).$$ The above decomposition is called the \emph{Peirce decomposition} of $A$ associated with $e$. The subsets $A_2(e) = ee^* A e^*e$, $A_1(e) = (1-ee^*) A  e^*e \oplus ee^* A (1-e^*e),$ and  $A_0(e) = (1-ee^*) A (1-e^*e)$ are called the \emph{Peirce subspaces} associated with $e$.

\subsection{Triple homomorphisms at the unit element}\  \vspace{1mm}

We explore first the behavior on the projections of a linear map which is a triple homomorphism at the unit.

\begin{lemma}\label{lemma31} Let $T:A\to B$ be a linear map between C$^*$-algebras, where $A$ is unital. Suppose $T$ is a triple homomorphism at the unit of $A.$ Then the following statements hold:\begin{enumerate}[$(a)$]
\item $T(1)$ is a partial isometry;
\item The identity $\{T(p),T(1),T(p)\}=\{T(p),T(1),T(1)\}$ holds for every projection $p\in A$.
\end{enumerate}
\end{lemma}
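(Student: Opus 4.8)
The plan is to extract both conclusions from the defining relation of a triple homomorphism at $1$ by feeding it suitable identities of the form $1=\{a,b,c\}$ built from a projection $p$ and the unit.

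For part $(a)$, the starting point is the obvious identity $1=\{1,1,1\}$, which by hypothesis gives $\{T(1),T(1),T(1)\}=T(1)$. Writing $u:=T(1)$ and unfolding the triple product via \eqref{eq C*-triple product}, this reads $\frac12(uu^*u+uu^*u)=uu^*u=u$, i.e. $uu^*u=u$. That is exactly the condition for $u$ to be a partial isometry, so part $(a)$ follows immediately with essentially no computation.

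For part $(b)$, the idea is to mimic the trick already used in the proof of Lemma \ref{lemma33}$(b)$: for a projection $p$ the element $1-2p$ is a self-adjoint unitary (a symmetry), so $\{(1-2p),1,(1-2p)\}=\frac12\big((1-2p)\cdot 1\cdot(1-2p)+(1-2p)\cdot 1\cdot(1-2p)\big)=(1-2p)^2=1$. Applying the triple-homomorphism-at-$1$ hypothesis to this identity yields
\[
\{T(1-2p),T(1),T(1-2p)\}=T(1).
\]
Now I would expand the left-hand side by linearity of $T$, writing $T(1-2p)=T(1)-2T(p)$, and then expand the triple product (which is bilinear in the outer two slots and conjugate-linear in the middle slot, but note the middle slot here is the \emph{fixed} argument $T(1)$, so everything in sight is linear). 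This produces $T(1)$ plus terms of the form $\{T(1),T(1),T(1)\}$, $\{T(p),T(1),T(1)\}$, $\{T(1),T(1),T(p)\}$, and $\{T(p),T(1),T(p)\}$ with integer coefficients. Using $\{T(1),T(1),T(1)\}=T(1)$ from part $(a)$ and the symmetry $\{x,y,z\}=\{z,y,x\}$ of the triple product (so the two cross terms coincide), the $T(1)$ on the two sides should cancel and the remaining terms should collapse, after dividing by the appropriate constant, to the asserted identity $\{T(p),T(1),T(p)\}=\{T(p),T(1),T(1)\}$.

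The only real point of care — and the place where a sign or coefficient error could creep in — is the bookkeeping in the expansion of $\{T(1)-2T(p),\,T(1),\,T(1)-2T(p)\}$: there are four outer-slot combinations, and one must track that the middle slot $T(1)$ stays fixed throughout so that the whole expression is genuinely linear rather than conjugate-linear in the relevant variables. I expect no conceptual obstacle here, since the structure is identical to the computation already carried out in Lemma \ref{lemma33}$(b)$; it is purely a matter of carefully collecting like terms and invoking $uu^*u=u$ to simplify $\{T(1),T(1),T(1)\}$ to $T(1)$.
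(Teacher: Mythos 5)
Your proposal is correct and follows essentially the same route as the paper: part $(a)$ from $1=\{1,1,1\}$ giving $T(1)T(1)^*T(1)=T(1)$, and part $(b)$ from expanding $\{T(1-2p),T(1),T(1-2p)\}=T(1)$, merging the two cross terms by the outer symmetry of the triple product, and cancelling $\{T(1),T(1),T(1)\}=T(1)$ via $(a)$. The bookkeeping you flag works out exactly as you predict (the coefficient to divide by is $4$), so there is no gap.
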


\begin{proof} $(a)$ The identity $\{1,1,1\}=1$ and the hypothesis imply $$T(1)=T(\{1,1,1\})=\{T(1),T(1),T(1)\}=T(1)T(1)^*T(1).$$

$(b)$ Let $p\in A$ be a projection. We know that $\{(1-2p),1,(1-2p)\}=1$. Thus
$$T(1)=T(\{(1-2p),1,(1-2p)\})=\{T(1-2p),T(1),T(1-2p)\}$$
$$= \{T(1),T(1),T(1)\} -4 \{T(p),T(1),T(1)\} + 4 \{T(p),T(1),T(p)\},$$ which combined with $(a)$ gives the desired statement.
\end{proof}

For the next result we shall assume continuity of our linear map.

\begin{proposition}\label{p symmetry}
Let $T:A\to B$ be a continuous linear map between C$^*$-algebras, where $A$ is unital. Suppose $T$ is a triple homomorphism at the unit of $A.$ Then the identity
$$\{T(x^*),T(1),T(1)\}=\{T(1),T(x),T(1)\},$$ holds for all $x\in A$. Consequently, $T(1)$ is a partial isometry and  $T(A)\subseteq B_2 (T(1))\oplus B_0(T(1))$.
\end{proposition}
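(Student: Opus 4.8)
The plan is to extract the desired identity by the same exponential/differentiation device used in Theorem \ref{t triple derivation at 1 are generalized derivations} and Proposition \ref{symmetric}, and then to read off the structural consequence from the Peirce calculus associated with the partial isometry $e=T(1)$ (which is a partial isometry by Lemma \ref{lemma31}$(a)$).

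First I would fix $a\in A_{sa}$. For every $t\in\RR$ the element $e^{ita}$ is unitary, so $e^{ita}(e^{ita})^*=1$ and hence $1=\{e^{ita},e^{ita},1\}$. Applying the hypothesis that $T$ is a triple homomorphism at $1$ gives the constant-in-$t$ identity $T(1)=\{T(e^{ita}),T(e^{ita}),T(1)\}$. Since $T$ is continuous and $t\mapsto e^{ita}$ is norm-differentiable with derivative $ia\,e^{ita}$, the map $t\mapsto T(e^{ita})$ is differentiable with derivative $iT(a\,e^{ita})$; differentiating the constant identity at $t=0$ and using that the triple product is linear in the outer variables and conjugate-linear in the middle one, I obtain $0=i\{T(a),T(1),T(1)\}-i\{T(1),T(a),T(1)\}$, that is, $\{T(a),T(1),T(1)\}=\{T(1),T(a),T(1)\}$ for every $a\in A_{sa}$. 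A subtle point here is that the naive choice $1=\{e^{ita},1,e^{-ita}\}$ yields after differentiation only the tautology $\{T(a),T(1),T(1)\}=\{T(1),T(1),T(a)\}$, which holds automatically by the symmetry of the triple product in its outer slots; the nontrivial content comes precisely from placing the two copies of $e^{ita}$ in the first two slots, so that the conjugate-linearity in the middle variable produces the sign that does not cancel.

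Next I would pass to an arbitrary $x\in A$ by writing $x=a+ib$ with $a,b\in A_{sa}$, so that $x^*=a-ib$, $T(x^*)=T(a)-iT(b)$ and $T(x)=T(a)+iT(b)$. Expanding $\{T(x^*),T(1),T(1)\}$ using linearity in the first slot, and $\{T(1),T(x),T(1)\}$ using conjugate-linearity in the middle slot, both reduce to $\{T(1),T(a),T(1)\}-i\{T(1),T(b),T(1)\}$ after inserting the self-adjoint identity just obtained for $a$ and for $b$. This yields $\{T(x^*),T(1),T(1)\}=\{T(1),T(x),T(1)\}$ for all $x\in A$.

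Finally, for the structural consequence I would set $e=T(1)$ and rewrite the identity in terms of the Peirce operators of $e$. By the symmetry of the outer variables the left-hand side equals $\{e,e,T(x^*)\}$, while the right-hand side is $\{e,T(x),e\}$. The operator $L:=\{e,e,\,\cdot\,\}$ acts as the identity on $B_2(e)$, as $\tfrac12$ on $B_1(e)$ and as $0$ on $B_0(e)$, whereas the quadratic operator $\{e,\,\cdot\,,e\}$ maps $B$ into $B_2(e)$. Applying the Peirce-one projection $P_1(e)$ to the identity $\{e,e,T(x^*)\}=\{e,T(x),e\}$ therefore gives $\tfrac12\,P_1(e)\big(T(x^*)\big)=0$, hence $P_1(e)\big(T(x^*)\big)=0$; as $x^*$ ranges over all of $A$, this forces $T(A)\subseteq\ker P_1(e)=B_2(e)\oplus B_0(e)$. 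The only real work is the bookkeeping of the conjugate-linearity and the quotation of the standard Peirce arithmetic for a tripotent, and I expect no serious obstacle beyond the correct choice of unitary identity flagged above.
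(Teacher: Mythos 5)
Your proposal is correct and takes essentially the same route as the paper's own proof: the identity $1=\{e^{ita},e^{ita},1\}$, differentiation at $t=0$ exploiting conjugate-linearity in the middle slot, polarization via $x=a+ib$, and then comparison of Peirce components in the resulting identity $\{T(x^*),T(1),T(1)\}=\{T(1),T(x),T(1)\}$. The only cosmetic difference is that you quote the standard Peirce arithmetic (the operator $\{e,e,\cdot\}$ acting as $1$, $\tfrac12$, $0$ on $B_2(e)$, $B_1(e)$, $B_0(e)$, and $\{e,\cdot,e\}$ mapping into $B_2(e)$) where the paper verifies the same facts by an explicit computation with $ee^*$ and $e^*e$.
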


\begin{proof} Let us take $a\in A_{sa}$ and $t\in \RR.$ Since $e^{ita}$ is a unitary in $A$, we have $1=\{e^{iat},e^{iat},1\},$ and by the assumptions we get $T(1)=\{T(e^{iat}),T(e^{iat}),1)\}.$ By taking derivative in $t=0$, we obtain
$$0=\{T(a),T(1),T(1)\}-\{T(1),T(a),T(1)\}.$$ Since for
$x\in A,$ we can write $x=a+ib$ with $a,\; b\in  A_{sa},$ it follows from the above that
$$\{T(x^*),T(1),T(1)\}=\{T(a),T(1),T(1)\}-i\{T(b),T(1),T(1)\}$$
$$=\{T(1),T(a),T(1)\}-i\{T(1),T(b),T(1)\}=\{T(1),T(x),T(1)\}.$$

It follows from Lemma \ref{lemma31} that $e=T(1)$ is a partial isometry. For the final statement we observe that for each $a\in A_{sa}$ we have $$ee^* T(a) e^*e + \frac12 ( (1-ee^*) T(a)  e^*e \oplus ee^* T(a) (1-e^*e) ) = \{T(a),T(1),T(1)\}$$ $$=\{T(1),T(a),T(1)\} = e T(a)^* e = ee^* (e T(a)^* e) e^* e, $$ which shows that $(1-ee^*) T(a)  e^*e \oplus ee^* T(a) (1-e^*e) = 0,$ and hence $$T(a) =  ee^* T(a) e^*e + (1-ee^*) T(a) (1-e^*e)\in B_2(e) \oplus B_0(e).$$
\end{proof}

For the next result we explore new arguments with higher derivatives.

\begin{proposition}\label{p zero annihilating the zero part}
Let $T:A\to B$ be a continuous linear map between C$^*$-algebras, where $A$ is unital. Suppose $T$ is a triple homomorphism at the unit of $A.$ Then $T(1)$ is a partial isometry and $T(A)\subseteq B_2 (T(1))$.
\end{proposition}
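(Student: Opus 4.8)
The plan is to exploit a triple-product identity in which the unit, and hence $e:=T(1)$, does \emph{not} appear as an entry, so that the Peirce-zero part of the range is not automatically annihilated, and then to extract the required vanishing from a third derivative. First I recall from Proposition \ref{p symmetry} that $e=T(1)$ is a partial isometry and that $T(A)\subseteq B_2(e)\oplus B_0(e)$; writing $P=ee^*$ and $Q=e^*e$, this says every $T(a)$ decomposes as $T(a)=P_2(e)T(a)+P_0(e)T(a)$ with no Peirce-one part. Hence it suffices to prove $P_0(e)T(a)=(1-P)T(a)(1-Q)=0$ for each $a\in A_{sa}$, since then $T(a)=P_2(e)T(a)\in B_2(e)$ at once.

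The identity I would use is $\{u,u^2,u\}=1$ for every unitary $u\in A$ (indeed $\{u,u^2,u\}=u(u^2)^*u=uu^{-2}u=1$). Fixing $a\in A_{sa}$ and taking $u=e^{ita}$, the hypothesis that $T$ is a triple homomorphism at $1$ gives
$$\{T(e^{ita}),T(e^{2ita}),T(e^{ita})\}=T(1)=e\qquad(t\in\RR).$$
Because $T$ is continuous, $f(t):=T(e^{ita})=\sum_{k\geq0}\frac{(it)^k}{k!}T(a^k)$ and $g(t):=T(e^{2ita})=\sum_{k\geq0}\frac{(2it)^k}{k!}T(a^k)$ are norm-convergent power series, and the left-hand side equals $f(t)\,g(t)^*\,f(t)$, an analytic $B$-valued function that is constantly $e$. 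Thus every positive-order Taylor coefficient vanishes; the coefficient of $t^3$ yields
$$\sum_{p+q+r=3}\frac{i^{p}(-2i)^{q}i^{r}}{p!\,q!\,r!}\;T(a^p)\,T(a^q)^*\,T(a^r)=0.$$

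Next I would apply the bounded projection $P_0(e)$, i.e. multiply on the left by $1-P$ and on the right by $1-Q$. The Peirce arithmetic $(1-P)e=e(1-Q)=0$ and $(1-Q)e^*=0$, together with $T(a^k)\in B_2(e)\oplus B_0(e)$ for all $k$, forces every summand with $p=0$, $r=0$, or $q=0$ to vanish after applying $P_0(e)$ (an outer factor $e$ is killed from the appropriate side, and when $q=0$ the middle factor $e^*$ is killed because the left factor already ends in $1-Q$). Only the term $(p,q,r)=(1,1,1)$ survives, and its scalar coefficient is $i(-2i)i=2i\neq0$, so $P_0(e)\big(T(a)T(a)^*T(a)\big)=0$. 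A short computation with $T(a)=P_2(e)T(a)+P_0(e)T(a)$, using $(1-P)T(a)=T(a)(1-Q)=x$ where $x:=P_0(e)T(a)$, shows $P_0(e)\big(T(a)T(a)^*T(a)\big)=xx^*x=\{x,x,x\}$. Hence $\{x,x,x\}=0$, and the JB$^*$-triple identity $\|\{x,x,x\}\|=\|x\|^3$ gives $x=0$, that is $P_0(e)T(a)=0$.

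The hard part is the first conceptual step. Every identity $\{u,v,w\}=1$ having the unit as an entry produces, after applying $T$, a triple product with $e=T(1)$ in some slot, and such a product annihilates all Peirce-zero contributions; so no differentiation of the identities used in Propositions \ref{t triple derivation at 1 are generalized derivations} and \ref{p symmetry} can detect the Peirce-zero part. Recognising that one must instead use the unit-free identity $\{u,u^2,u\}=1$, and that the Peirce-zero component only appears at third order---where the genuinely cubic term $T(a)T(a)^*T(a)$, whose $B_0(e)$-component is $xx^*x$, first arises---is the essential point. The remaining Peirce bookkeeping and the final appeal to $\|\{x,x,x\}\|=\|x\|^3$ are routine.
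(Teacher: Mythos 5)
Your proof is correct and takes essentially the same route as the paper's: both exploit the unit-free identity $\{e^{ita},e^{2ita},e^{ita}\}=1$, pass to the third-order term in $t$ (your Taylor-coefficient extraction is exactly the paper's triple differentiation at $t=0$), and use the Peirce decomposition from Proposition \ref{p symmetry} to see that the only surviving Peirce-zero contribution is $x_0x_0^*x_0=0$, whence $x_0=0$ and $T(A)\subseteq B_2(T(1))$. The only cosmetic difference is that you apply the projection onto $B_0(T(1))$ to the whole coefficient identity and kill terms by Peirce arithmetic, whereas the paper writes out the derivative formula explicitly and sorts its terms by membership in $B_2(T(1))$.
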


\begin{proof} As in previous cases, we fix $a\in A_{sa}$. Since $1=\{e^{iat},e^{2 iat},e^{iat}\},$ for all $t\in \RR,$ by hypothesis we get $T(1)=\{T(e^{iat}),T(e^{2iat}),T(e^{iat})\}.$ By taking a first derivative in $t$, we obtain
$$0=2 \{T( a e^{iat}),T(e^{2iat}),T(e^{iat})\} -2 \{T(e^{iat}),T(a e^{2iat}),T(e^{iat})\},$$ for all $t\in \mathbb{R}$. By taking subsequent derivatives at $t$ we get
\begin{equation}\label{eq second derivative} 0=2 \{T( a^2 e^{iat}),T(e^{2iat}),T(e^{iat})\} - 8 \{T( a e^{iat}),T(a e^{2iat}),T(e^{iat})\}
\end{equation} $$ + 2 \{T( a e^{iat}),T(e^{2iat}),T(a e^{iat})\}+4 \{T(e^{iat}),T(a^2 e^{2iat}),T(e^{iat})\}, $$ and
$$0=2 \{T( a^3 e^{iat}),T(e^{2iat}),T(e^{iat})\} - 4 \{T( a^2 e^{iat}),T(a e^{2iat}),T(e^{iat})\}$$ $$+ 2 \{T( a^2 e^{iat}),T(e^{2iat}),T(a e^{iat})\} - 8 \{T( a^2 e^{iat}),T(a e^{2iat}),T(e^{iat})\}$$ $$ +16 \{T( a e^{iat}),T(a^2 e^{2iat}),T(e^{iat})\} - 8 \{T( a e^{iat}),T(a e^{2iat}),T(a e^{iat})\}$$ $$+ 2 \{T( a^2 e^{iat}),T(e^{2iat}),T(a e^{iat})\}-4 \{T( a e^{iat}),T(a e^{2iat}),T(a e^{iat})\}$$ $$+ 2 \{T( a e^{iat}),T(e^{2iat}),T(a^2 e^{iat})\} +4 \{T(a e^{iat}),T(a^2 e^{2iat}),T(e^{iat})\}$$ $$-8 \{T(e^{iat}),T(a^3 e^{2iat}),T(e^{iat})\} +4 \{T(e^{iat}),T(a^2 e^{2iat}),T(a e^{iat})\}.$$ By replacing $t$ with $0$ we get
\begin{equation}\label{eq TaTaTa} 0=2 \{T( a^3),T(1),T(1)\} - 12 \{T( a^2 ),T(a ),T(1)\}+ 6 \{T( a^2 ),T(1),T(a )\}
\end{equation} $$-8 \{T(1),T(a^3 ),T(1)\} +24 \{T( a),T(a^2 ),T(1)\} -12 \{T( a),T(a),T(a )\}.$$

Now, by Proposition \ref{p symmetry}, we write $T(a) = x_2 + x_0$, $T(a^2) =y_2+y_0$ and $T(a^3) = z_2 +z_0$, where $x_j,y_j,z_j\in B_j(T(1))$ for all $j=2,0$. It is not hard to check that $\{T( a^3),T(1),T(1)\},$ $\{T( a^2 ),T(a ),T(1)\},$ $\{T( a^2 ),T(1),T(a )\},$ $\{T(1),T(a^3 ),T(1)\}$, and $\{T( a),T(a^2 ),T(1)\}$ all lie in $B_2(T(1)),$ while\linebreak $\{T( a),T(a),T(a )\} = \{x_2,x_2,x_2\}+ \{x_0,x_0,x_0\}$ with $\{x_2,x_2,x_2\}\in B_2(T(1))$ and $\{x_0,x_0,x_0\}\in B_0(T(1))$. It follows from \eqref{eq TaTaTa} that $\{x_0,x_0,x_0\}= x_0x_0^* x_0=0$ and hence $x_0 x_0^*x_0 x_0^*=x_0=0$. We have therefore shown that $T(a)\in B_2(T(1))$ for all $a\in A_{sa}$. The desired conclusion follows from the linearity of $T$.
\end{proof}

We can now establish our main result for bounded linear maps which are triple homomorphisms at the unit element. We recall that given a partial isometry $e$ in a C$^*$-algebra $A$, the Peirce subspace $A_2(e) = ee^* A e^*e$ is a JB$^*$-algebra with Jordan product $x\circ_e y := \{ x, e, y\}= \frac12 (x e^* y + y e^* x),$ and involution $x^{\sharp_e} =\{e,x,e\} = e x^*e$.

\begin{theorem}\label{theorem3131}
Let $T:A\to B$ be a continuous linear map between C$^*$-algebras, where $A$ is unital. Suppose $T$ is a triple homomorphism at the unit of $A.$ Then $T$ is a triple homomorphism. Furthermore, $T(1)$ is a partial isometry and $T: A \to B_2 (T(1))$ is a Jordan $^*$-homomorphism.
\end{theorem}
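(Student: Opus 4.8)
The plan is to build on the two preceding propositions, which already supply the structural backbone and do the genuinely heavy derivative computations. By Proposition \ref{p zero annihilating the zero part} we know that $e:=T(1)$ is a partial isometry and that $T(A)\subseteq B_2(e)$, so $T$ may be regarded as a linear map into the JB$^*$-algebra $\bigl(B_2(e),\circ_e,\sharp_e\bigr)$, whose unit is precisely $e$. By Proposition \ref{p symmetry}, the identity $\{T(x^*),e,e\}=\{e,T(x),e\}$ holds for every $x\in A$; since every value of $T$ lies in $B_2(e)$ and $e$ is the unit of $B_2(e)$, the left-hand side equals $T(x^*)$ and the right-hand side equals $eT(x)^*e=T(x)^{\sharp_e}$. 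Hence $T(x^*)=T(x)^{\sharp_e}$ for all $x\in A$; that is, $T$ is a symmetric, unital linear map into $B_2(e)$ carrying $A_{sa}$ into the self-adjoint part of $B_2(e)$.

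The heart of the argument is to extract Jordan multiplicativity from the second order information already computed. Fixing $a\in A_{sa}$ and evaluating \eqref{eq second derivative} at $t=0$ gives
$$0 = \{T(a^2),e,e\} - 4\{T(a),T(a),e\} + \{T(a),e,T(a)\} + 2\{e,T(a^2),e\}.$$
Here $\{T(a^2),e,e\}=\{e,T(a^2),e\}=T(a^2)$, because $T(a^2)\in B_2(e)$ is $\sharp_e$-self-adjoint; and a short Peirce computation (using $eu^*=ue^*$ and $u^*e=e^*u$, both valid for a $\sharp_e$-self-adjoint $u\in B_2(e)$) shows $\{T(a),T(a),e\}=\{T(a),e,T(a)\}$. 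Substituting these simplifications collapses the displayed identity to $3T(a^2)=3\{T(a),e,T(a)\}$, that is,
$$T(a^2)=\{T(a),e,T(a)\}=T(a)\circ_e T(a),\qquad a\in A_{sa}.$$
Linearising ($a\mapsto a+b$ with $a,b\in A_{sa}$) yields $T(a\circ b)=T(a)\circ_e T(b)$ on $A_{sa}$, and the $\mathbb{C}$-bilinearity of $\circ_e$ together with the decomposition $x=a_1+ia_2$ extends this to $T(x\circ y)=T(x)\circ_e T(y)$ for all $x,y\in A$. Combined with the symmetry established above, this proves that $T:A\to B_2(e)$ is a unital Jordan $^*$-homomorphism.

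Finally, to recover that $T$ is a triple homomorphism, I would use that in any C$^*$-algebra the triple product is expressible through the Jordan product and involution, $\{a,b,c\}=(a\circ b^*)\circ c+(c\circ b^*)\circ a-(a\circ c)\circ b^*$, and that the very same expression formed with $(\circ_e,\sharp_e)$ reproduces the intrinsic JB$^*$-algebra triple product of $B_2(e)$, which in turn coincides with the restriction of the ambient C$^*$-triple product of $B$ to the Peirce-$2$ subspace $B_2(e)$. Since a Jordan $^*$-homomorphism preserves $\circ$ and $^*$, it preserves this Jordan-triple expression; hence $T(\{a,b,c\})=\{T(a),T(b),T(c)\}$ for all $a,b,c\in A$, and $T$ is a triple homomorphism.

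I expect the main obstacle to be the bookkeeping in the two reconciliation steps rather than any new idea. First, one must verify the Peirce identity $\{T(a),T(a),e\}=\{T(a),e,T(a)\}$, so that the second-derivative relation cleanly collapses to $T(a^2)=T(a)\circ_e T(a)$. Second, one must match the intrinsic triple product of the JB$^*$-algebra $B_2(e)$ with the ambient triple product of $B$, so that Jordan $^*$-multiplicativity upgrades to preservation of $\{\cdot,\cdot,\cdot\}$. Both are standard facts of Peirce and JB$^*$-triple calculus, and the computational derivative work has already been carried out in Proposition \ref{p zero annihilating the zero part}.
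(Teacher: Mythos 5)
Your proposal is correct and follows essentially the same route as the paper's proof: invoke Propositions \ref{p symmetry} and \ref{p zero annihilating the zero part} to get $T(A)\subseteq B_2(T(1))$ and $T(x^*)=T(x)^{\sharp_{T(1)}}$, evaluate \eqref{eq second derivative} at $t=0$, collapse it via Peirce arithmetic to $T(a^2)=T(a)\circ_{T(1)}T(a)$ on $A_{sa}$, and polarize. The only difference is that you spell out two steps the paper leaves implicit (the identity $\{T(a),T(a),T(1)\}=\{T(a),T(1),T(a)\}$ for $\sharp_{T(1)}$-self-adjoint elements, and the passage from Jordan $^*$-homomorphism into $B_2(T(1))$ to triple homomorphism via the expression of the triple product through $\circ_e$ and $\sharp_e$), and both verifications are carried out correctly.
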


\begin{proof} By Lemma \ref{lemma31} the element $T(1)$ is a partial isometry. Proposition \ref{p zero annihilating the zero part} proves that $T(A)\subseteq B_2 (T(1))$, and consequently, Proposition \ref{p symmetry} guarantees that $T(x^*) =\{T(x^*),T(1),T(1)\}= \{T(1), T(x), T(1)\} = T(x)^{\sharp_{T(1)}}$ for every $x\in A$. It is not hard to see from these properties that $T(a^2) =\{T( a^2 ),T(1),T(1)\} =\{T( a ),T(a ),T(1)\}=\{T( a ),T(1),T(a)\},$ for every $a\in A_{sa}$.\smallskip

The proof will be completed if we show that $T: A \to B_2 (T(1))$ is a Jordan $^*$-homomorphism. We shall only prove that $T$ preserves the corresponding Jordan product. Following the arguments in the proof of Proposition \ref{p zero annihilating the zero part}, and replacing $t$ with $0$ in \eqref{eq second derivative} we arrive at $$ 0=2 \{T( a^2 ),T(1),T(1)\} - 8 \{T( a ),T(a ),T(1)\} $$ $$+ 2 \{T( a ),T(1),T(a)\}+4 \{T(1),T(a^2),T(1)\}, $$ and then $$ T(a^2) =  \{T( a^2 ),T(1),T(1)\} = \{T( a ),T(a ),T(1)\} $$ $$= \{T( a ),T(1),T(a )\} = T(a)\circ_{T(1)} T(a),$$ for all $a\in A_{sa}$. A standard polarization argument proves that $T$ is a Jordan $^*$-homomorphism.\end{proof}

\begin{problem}
Let $T:A\to B$ be a linear map between C$^*$-algebras, where $A$ is unital. Suppose $T$ is a triple homomorphism at the unit of $A.$ Is $T$ continuous?
\end{problem}

Accordingly to the structure of this note, the reader is probably interested on bounded linear maps which are triple homomorphisms at zero. It is not a big surprise that these maps are directly connected with the so-called \emph{orthogonality preserving operators} in the sense studied, for example, in \cite{Wolff94,BurFerGarMarPe08}, and subsequent papers. We recall that a linear map $T$ between C$^*$-algebras is called orthogonality preserving if the equivalence $$a\perp b \hbox{ in } A \Rightarrow T(a)\perp T(b) \hbox{ in } B.$$
It is known that elements $a,b$ in a C$^*$-algebra $A$ are orthogonal if, and only if, $\{a,a,b\}=0$ (see, for example, \cite[Lemma 1 and comments in page 221]{BurFerGarMarPe08}).  The main result in \cite{BurFerGarMarPe08} establishes a complete description of those continuous linear maps between C$^*$-algebras which preserver orthogonal elements. Let $T :A \to B$ be a bounded linear map between two C$^*$-algebras, Corollary 18 in \cite{BurFerGarMarPe08} proves that $T$ is orthogonality preserving if, and only if, $T$ preserves zero-triple-products (i.e. $\{a,b,c\}=0$ in $A$ implies $\{T(a),T(b),T(c)\}=0$ in $B$), and the latter is precisely the notion of being a triple homomorphism at zero.

\end{document}